\newcommand{\GL}{{G^\mathcal{L}}}%Games
\newcommand{\GR}{{G^\mathcal{R}}}
\newcommand{\cgstar}{\mathord\ast}
\newcommand{\cgup}{\mathord\uparrow}
\newcommand{\cgupstar}{\cgup\cgstar}
\newcommand{\cgdoubleup}{\mathord\Uparrow}
\newcommand{\cgallstar}{\mathord\circledast}
\newcommand{\os}{\mathbin:}
\newcommand{\cggfuz}{\mathrel{\rule[-.05ex]{.05ex}{1.3ex}\hspace{.8pt}\rhd}}
\newcommand{\cglfuz}{\mathrel{\lhd\hspace{.8pt}\rule[-.05ex]{.05ex}{1.3ex}}}
\newcommand{\cgfuzzy}{\mathrel\|}
\newcommand{\LS}[1]{\text{LS}({#1})}
\newcommand{\LSI}[1]{\text{\emph{LS}}({#1})}
\newcommand{\RS}[1]{\text{RS}({#1})}
\newcommand{\RSI}[1]{\text{\emph{RS}}({#1})}
\newcommand{\LD}[1]{\text{Ld}({#1})}
\newcommand{\LDI}[1]{\text{\emph{Ld}}({#1})}
\newcommand{\RD}[1]{\text{Rd}({#1})}
\newcommand{\RDI}[1]{\text{\emph{Rd}}({#1})}
\newtheorem{theorem}{Theorem}%Environments
\newtheorem{corollary}[theorem]{Corollary}
\newtheorem{lemma}[theorem]{Lemma}
\newtheorem{definition}[theorem]{Definition}
\newtheorem{observation}[theorem]{Observation}
\journal{Journal of Combinatorial Theory, Series B}
\begin{document}

\begin{frontmatter}

\title{Indecomposable combinatorial games}

\author{Michael Fisher}
\ead{mfisher@wcupa.edu}
\affiliation{organization={West Chester University},
            country={United States of America}}

\author{Neil A. McKay}
\ead{neil.mckay@unb.ca}
\affiliation{organization={University of New Brunswick, Saint John},
            country={Canada}}

\author{Rebecca Milley\fnref{fn1}}
\fntext[fn1]{Funded by Natural Sciences and Engineering Research Council of Canada.}
\ead{rmilley@grenfell.mun.ca}
\affiliation{organization={Grenfell Campus, Memorial University},
            country={Canada}}

\author{Richard J. Nowakowski}
\ead{r.nowakowski@dal.ca}
\affiliation{organization={Dalhousie University},
            country={Canada}}

\author{Carlos P. Santos\corref{correspondingauthor}}
\cortext[correspondingauthor]{Corresponding author. Carlos Santos' work is funded by national funds through the FCT - Funda\c{c}\~{a}o para a Ci\^{e}ncia e a Tecnologia, I.P., under the scope of the projects UIDB/00297/2020 and UIDP/00297/2020 (Center for Mathematics and \linebreak Applications).}
\ead{cmf.santos@fct.unl.pt}
\affiliation{organization={Center for Mathematics and Applications (NovaMath), FCT NOVA},
            country={Portugal}}

\begin{abstract}
In Combinatorial Game Theory, short game forms are defined recursively over all the positions the two players are allowed to move to. A form is decomposable if it can be expressed as a disjunctive sum of two forms with smaller birthday. If there are no such summands, then the form is indecomposable. The main contribution of this document is the characterization of the indecomposable nimbers and the characterization of the indecomposable numbers. More precisely, a nimber is indecomposable if and only if its size is a power of two, and a number is indecomposable if and only if its absolute value is less or equal than one.
\end{abstract}

\begin{keyword}
Combinatorial Game Theory \sep additive decompositions of combinatorial games

\MSC[2020] 91A46 \sep  \MSC[2020] 06A07
\end{keyword}

\end{frontmatter}

\section{Introduction}
\label{sec:intro}
We assume that the reader is acquainted with the basic concepts of short, two-person perfect information combinatorial games (CGT) as presented in any of \cite{ANW019,BCG001,Con001,Sie013}. We only consider normal play where the player who cannot move loses. Indeed, little more than the theory of \textsc{nim} \citep{Bou02} -- nimbers -- and the theory of \textsc{blue-red hackenbush} \citep{BCG001} -- numbers -- is required.

 A central theme of CGT is to simplify the analysis of positions. One way is to replace a position by the smallest (in a game-tree sense) equivalent position, known as the canonical form\footnote{To avoid confusion because of the many informal meanings of `game', we use the intuitively obvious terms `ruleset' and position. We use `game form', or just `game' or `form', for the mathematical object describing a position.}. A second more important way arises when positions decompose into two or more independent components, written $G_1+G_2+\cdots+G_k$, and a player is only allowed to play in one component. In that situation, the CGT  approach is to analyze the canonical form of each component individually, and then give rules, or very good heuristics,  for choosing the best in which to play.

For example, the classic ruleset \textsc{nim} was originally defined as a sum of components. In other rulesets like \textsc{go, domineering,} or \textsc{konane} \citep{Sie013}, parts of the board often become isolated from each other. Our question is the following: \textit{when can we take a canonical form and, usefully, express it as a sum?} The components of the sum should be simpler than the original, so that the analysis becomes easier. That means that the birthdays of the components should be smaller than the birthday of the sum -- game tree height, denoted by $b(G)$. By defining the concept of decomposability as follows, we will be able to show, by the end of this paper, that analyzing endgames with only numbers as components or analyzing \textsc{nim} can be viewed as the breaking of all the components into ``indecomposable components''.\\

\vspace{-0.2cm}
\begin{definition} \label{def:dec}
A game form $G$ is \emph{decomposable} if there are $H$ and $J$ such that $b(H)<b(G)$, $b(J)<b(G)$ and $G=H+J$. If there are no such summands, then the game form $G$ is \emph{indecomposable}.
\end{definition}

The main contribution of this paper is the characterization of the indecomposable nimbers and the characterization of the indecomposable numbers. More precisely, a number is indecomposable if and only if its absolute value is less or equal than one -- Theorem \ref{th:numbers} -- and a nimber is indecomposable if and only if its size is a power of two -- Theorem \ref{th:nimbers}.

Although in general the decompositions are not unique, it is possible to define the concept of ``strong decomposition'', which, in a way, is the simplest of all.

\begin{definition}
A game $G$ is \emph{strongly decomposable} if there are $H$ and $J$ different than zero such that $b(H)+b(J)=b(G)$ and $G=H+J$.
\end{definition}

Observe that a game may be decomposable without being strongly decomposable. For example, if we take $G=\{0\,|\,\cgstar=2\}$, $H=*2$ and $J=\cgupstar$, then we find that $b(G)=3$, $b(H)=2$, $b(J)=2$, $G=H+J$, and $G$ is decomposable.
However, an exhaustive search allows us to conclude that $G$ is not strongly decomposable. We will also characterize the strong decompositions of numbers and nimbers -- see Theorems \ref{th:strongnumbers} and \ref{th:strongnimbers}.\\

Knowing that a form is indecomposable can give information about the outcome of a disjunctive sum. Consider the \textsc{blue-red-green hackenbush} position shown at Figure~\ref{fig:fig1}. The component $G$ is $*8$. On the other hand, the birthday of $H$ is less than or equal to $6$ and that  of $J$ is less than or equal to $7$ (number of edges of each component). Since $G$ is indecomposable ($8$ is a power of $2$), we cannot have $G=H+J$ therefore $G+H+J\ne 0$, and consequently,  $G+H+J$ is not a $\mathcal{P}$-position. That is, at least one of the players must have a winning first move. Here, we concluded that there is a winning move for one of the players using only an algebraic result, without considering game strategies. This type of argument can be used in all rulesets whose birthdays of the positions can be  naturally bounded (placement games, games whose moves are piece-captures, etc.).

 \begin{figure}[htbp]
\label{fig:fig1}
\begin{center}
\scalebox{0.7}{
\definecolor{ffqqqq}{rgb}{1.,0.,0.}
\definecolor{qqqqff}{rgb}{0.,0.,1.}
\definecolor{qqffqq}{rgb}{0.,1.,0.}
\definecolor{xdxdff}{rgb}{0.49019607843137253,0.49019607843137253,1.}
\begin{tikzpicture}
\clip(3.4,-6.6) rectangle (7.2,-0.3);
\draw [line width=2.8pt,color=qqffqq] (4.,-6.)-- (4.,-5.32);
\draw [line width=2.8pt,color=qqffqq] (4.,-5.32)-- (4.,-4.64);
\draw [line width=2.8pt,color=qqffqq] (4.,-4.64)-- (4.,-3.96);
\draw [line width=2.8pt,color=qqffqq] (4.,-3.96)-- (4.,-3.28);
\draw [line width=2.8pt,color=qqffqq] (4.,-3.28)-- (4.,-2.6);
\draw [line width=2.8pt,color=qqffqq] (4.,-2.6)-- (4.,-1.92);
\draw [line width=2.8pt,color=qqffqq] (4.,-1.92)-- (4.,-1.24);
\draw [line width=2.8pt,color=qqffqq] (4.,-1.24)-- (4.,-0.56);
\draw [line width=2.8pt,color=qqqqff] (6.,-1.24)-- (6.,-1.92);
\draw [line width=2.8pt,color=ffqqqq] (6.,-1.92)-- (6.,-2.6);
\draw [line width=2.8pt,color=qqqqff] (6.,-2.6)-- (6.,-3.28);
\draw [line width=2.8pt,color=qqffqq] (6.,-3.28)-- (6.,-3.96);
\draw [line width=2.8pt,color=qqffqq] (6.,-3.96)-- (6.,-4.64);
\draw [line width=2.8pt,color=qqqqff] (6.,-4.64)-- (6.,-5.32);
\draw [line width=2.8pt,color=ffqqqq] (6.,-5.32)-- (6.,-6.);
\draw [line width=2.8pt,color=qqqqff] (5.,-1.92)-- (5.,-2.6);
\draw [line width=2.8pt,color=qqffqq] (5.,-2.6)-- (5.,-3.28);
\draw [line width=2.8pt,color=qqqqff] (5.,-3.28)-- (5.,-3.96);
\draw [line width=2.8pt,color=qqffqq] (5.,-3.96)-- (5.,-4.64);
\draw [line width=2.8pt,color=ffqqqq] (5.,-4.64)-- (5.,-5.32);
\draw [line width=2.8pt,color=qqqqff] (5.,-5.32)-- (5.,-6.);
\begin{scriptsize}
\draw [fill=white] (4.,-5.32) circle (2.5pt);
\draw [fill=white] (5.,-5.32) circle (2.5pt);
\draw [fill=white] (6.,-5.32) circle (2.5pt);
\draw [fill=white] (4.,-4.64) circle (2.5pt);
\draw [fill=white] (5.,-4.64) circle (2.5pt);
\draw [fill=white] (6.,-4.64) circle (2.5pt);
\draw [fill=white] (4.,-3.96) circle (2.5pt);
\draw [fill=white] (5.,-3.96) circle (2.5pt);
\draw [fill=white] (6.,-3.96) circle (2.5pt);
\draw [fill=white] (4.,-3.28) circle (2.5pt);
\draw [fill=white] (5.,-3.28) circle (2.5pt);
\draw [fill=white] (6.,-3.28) circle (2.5pt);
\draw [fill=white] (4.,-2.6) circle (2.5pt);
\draw [fill=white] (5.,-2.6) circle (2.5pt);
\draw [fill=white] (6.,-2.6) circle (2.5pt);
\draw [fill=white] (4.,-1.92) circle (2.5pt);
\draw [fill=white] (5.,-1.92) circle (2.5pt);
\draw [fill=white] (6.,-1.92) circle (2.5pt);
\draw [fill=white] (4.,-1.24) circle (2.5pt);
\draw [fill=white] (6.,-1.24) circle (2.5pt);
\draw [fill=white] (4.,-0.56) circle (2.5pt);
\end{scriptsize}
\draw [line width=3.6pt] (3.,-6.)-- (7.,-6.);
\draw (3.72,-6.1) node[anchor=north west] {$G$};
\draw (3.72+1,-6.1) node[anchor=north west] {$H$};
\draw (3.72+1+1.05,-6.1) node[anchor=north west] {$J$};
\end{tikzpicture}
}
\caption{Since $G=*8$ is indecomposable, we know that $G\neq H+J$ and $G+H+J\not\in\mathcal{P}$.}
\end{center}
\end{figure}
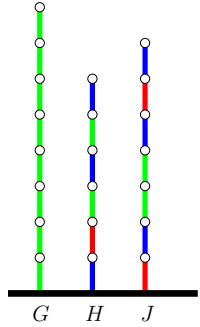

  \subsection{Background}\label{subsec:background}

It is well known that if $G^{\mathcal{L}}$ and $G^{\mathcal{R}}$ are all numbers such that all elements of the first are strictly less than all elements of the second, then the game form $\{\GL \! \mid \! \GR \}$ is the simplest number between the maximum element of $G^{\mathcal{L}}$ and the minimum element of $G^{\mathcal{R}}$ \citep{ANW019,BCG001,Con001,Sie013}. This construction is very reminiscent of Dedekind's construction, but with a recursive nature; in both cases, new numbers are formed in the gaps between ``cuts'' of simpler ones. When the construction is naturally extended beyond short games, we get the surreal numbers \citep{Knu74}, which include the reals, the ordinals, and much more (Figure \ref{fig:fig2}). Considering only short games, as done in this paper, we have only dyadics, i.e., irreducible fractions whose denominators are powers of $2$.

 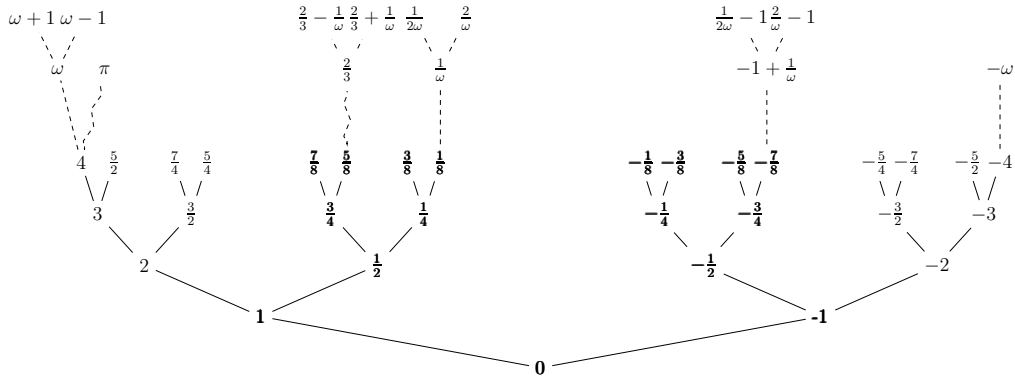
\begin{figure}[htbp]
\begin{center}
\scalebox{0.62}{
\begin{tikzpicture}[level distance=2cm,
level 1/.style={sibling distance=12cm, level distance=1.1cm},
level 2/.style={sibling distance=5cm, level distance=1.1cm},
level 3/.style={sibling distance=2cm, level distance=1.1cm},
level 4/.style={sibling distance=0.7cm, level distance=1.1cm},
level 5/.style={sibling distance=1cm, level distance=2cm},
level 6/.style={sibling distance=1.1cm, level distance=1.1cm}]
\tikzset{dashed edge/.style={dash pattern=on 2pt off 2pt, draw=black, line cap=round, shorten <=2pt, shorten >=2pt}}
\tikzset{
  myzigzagedge/.style={
    decorate,
    decoration={
      zigzag,
      segment length=20,
      amplitude=2,
      post=lineto,
      post length=2pt,
    },
    draw=black,
    shorten <=2pt,
    shorten >=2pt,
  }
}

\node {\pmb{0}}[grow=up]
         child {node {\pmb{-1}}
                      child{node {$-2$}
                                 child{node{$-3$}
                                             child{node {$-4$}
                                                        child[dashed]{node {$-\omega$}
                                                                     }
                                                  }
                                             child{node {$-\frac{5}{2}$}
                                                  }
                                      }
                                 child{node {$-\frac{3}{2}$}
                                             child{node {$-\frac{7}{4}$}
                                                  }
                                             child{node {$-\frac{5}{4}$}
                                                  }
                                      }
                           }
                      child{node {$\pmb{-\frac{1}{2}}$}
                                 child{node {$\pmb{-\frac{3}{4}}$}
                                             child{node {$\pmb{-\frac{7}{8}}$}
                                                        child[dashed]{node {$-1+\frac{1}{\omega}$}
                                                                            child[dashed]{node {$\frac{2}{\omega}-1$}
                                                                                         }
                                                                            child[dashed]{node {$\frac{1}{2\omega}-1$}
                                                                                         }
                                                                     }
                                                  }
                                             child{node {$\pmb{-\frac{5}{8}}$}
                                                  }
                                      }
                                 child{node {$\pmb{-\frac{1}{4}}$}
                                             child{node {$\pmb{-\frac{3}{8}}$}
                                                  }
                                             child{node {$\pmb{-\frac{1}{8}}$}
                                                  }
                                      }
                           }
               }
         child {node {\pmb{1}}
                      child{node {$\pmb{\frac{1}{2}}$}
                                 child{node {$\pmb{\frac{1}{4}}$}
                                             child{node {$\pmb{\frac{1}{8}}$}
                                                        child[dashed]{node {$\frac{1}{\omega}$}
                                                                            child[dashed]{node {$\frac{2}{\omega}$}
                                                                                         }
                                                                            child[dashed]{node {$\frac{1}{2\omega}$}
                                                                                         }
                                                                     }
                                                  }
                                             child{node {$\pmb{\frac{3}{8}}$}
                                                  }
                                      }
                                 child{node {$\pmb{\frac{3}{4}}$}
                                             child{node {$\pmb{\frac{5}{8}}$}
                                                        child[dashed]{node {$\frac{2}{3}$} edge from parent [myzigzagedge]
                                                                            child[dashed]{node {$\frac{2}{3}+\frac{1}{\omega}$}
                                                                                         }
                                                                            child[dashed]{node {$\frac{2}{3}-\frac{1}{\omega}$}
                                                                                         }
                                                                     }
                                                  }
                                             child{node {$\pmb{\frac{7}{8}}$}
                                                  }
                                      }
                           }
                      child{node {$2$}
                                 child{node {$\frac{3}{2}$}
                                             child{node {$\frac{5}{4}$}
                                                  }
                                             child{node {$\frac{7}{4}$}
                                                  }
                                      }
                                 child{node {$3$}
                                             child{node {$\frac{5}{2}$}
                                                  }
                                             child{node {$4$}
                                                        child[dashed]{node {$\pi$} edge from parent [myzigzagedge]
                                                             }
                                                        child[dashed]{node {$\omega$}
                                                                            child[dashed]{node {$\omega-1$}
                                                                                         }
                                                                            child[dashed]{node {$\omega+1$}
                                                                                         }
                                                             }
                                                  }
                                      }
                           }
               };
\end{tikzpicture}
}
\caption{In the transfinite number tree, the nodes at level $\alpha$ correspond to the games born on day $\alpha$. The dyadics $x$ such that $|x|\leqslant 1$ are displayed in bold.}
\label{fig:fig2}
\end{center}
\end{figure}

Theorem \ref{th:brnumbers} provides a necessary result on birthdays. Although the proof is straightforward, it is included here since it does not appear concisely in the literature.
\vspace{0.3cm}
\begin{theorem} \label{th:brnumbers}
If $x$ is an integer then $b(x) = |x|$. If $x$ is a non-integer dyadic such that $|x|=m+\frac{r}{2^n}$, where $0< r<2^n$ and $r$ is odd, then $b(x)=m+1+n$.
\end{theorem}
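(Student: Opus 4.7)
The plan is to invoke the simplicity theorem for numbers, which identifies the canonical form of any dyadic, and then track birthdays by induction. Throughout I use $b(-G)=b(G)$ to reduce to $x\geq 0$.

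For integers, induct on $x$. The canonical form of $0$ has no options, so $b(0)=0$; for positive $x$, the canonical form is $\{x-1\mid\}$, whence $b(x)=1+b(x-1)=x$ by the inductive hypothesis.

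For non-integer dyadics, write $x=m+r/2^{n}$ with $r$ odd and $0<r<2^{n}$, and induct on $n$. Let $L=m+(r-1)/2^{n}$ and $R=m+(r+1)/2^{n}$; both have even numerators over $2^n$ and so reduce to dyadics with denominator strictly smaller than $2^{n}$. By the simplicity theorem, since $L<x<R$ and $x$ is the simplest dyadic in this interval, the canonical form of $x$ is $\{L\mid R\}$. Thus $b(x)=1+\max\{b(L),b(R)\}$, and the task reduces to applying the inductive hypothesis to $L$ and $R$.

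Writing $r-1=2^{s_{L}}r_{L}$ and $r+1=2^{s_{R}}r_{R}$ with $r_{L},r_{R}$ odd (when nonzero), the inductive hypothesis gives $b(L)=m+1+(n-s_{L})$ when $L$ is a non-integer, and analogously for $R$; in the corner cases $r=1$ and $r=2^{n}-1$ one endpoint collapses to an integer of birthday $m$ or $m+1$. The key observation is that $r-1$ and $r+1$ are consecutive even integers, so exactly one of them has $2$-adic valuation equal to $1$; the corresponding endpoint then reduces to a dyadic with denominator $2^{n-1}$ and, by the inductive hypothesis, has birthday exactly $m+n$. The other endpoint has birthday at most $m+n$, so $b(x)=1+(m+n)=m+1+n$. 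The base case $n=1$, where $x=m+1/2=\{m\mid m+1\}$ and $b(x)=m+2$, is immediate.

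The only delicate point, and essentially the only obstacle, is verifying the corner cases $r=1$ and $r=2^{n}-1$: there, one endpoint is an integer of birthday $m$ or $m+1$ rather than a non-integer dyadic, but the other endpoint remains a non-integer dyadic that still achieves birthday $m+n$ via the inductive hypothesis, so the formula $b(x)=m+1+n$ persists.
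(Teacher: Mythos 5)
Your proof is correct and follows essentially the same route as the paper's: identify the canonical form $\{L\mid R\}$ of the dyadic, observe that exactly one option is an irreducible fraction over $2^{n-1}$ (your $2$-adic valuation argument is the paper's ``one of $j$, $j+1$ is even and the other odd''), and apply induction. Your explicit treatment of the corner cases $r=1$ and $r=2^{n}-1$, where one option degenerates to an integer, is slightly more careful than the paper's, but the argument is the same.
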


\begin{proof}
Let us consider only $x\geqslant 0$, as the proof for $x<0$ is entirely analogous. If $x=0$, then $b(x)=0$ and the result is verified. If $x$ is a positive integer, then its canonical form is $\{x-1\,|\,\}$ and $b(x)=1+b(x-1)$. In this case as well, by induction, $b(x)=1+x-1=x$ and the result is verified. Finally, if $x=m+\frac{r}{2^n}>0$ where $0<r=2j+1<2^n$, and $j\geqslant 0$, then its canonical form is $\{m+\frac{j}{2^{n-1}}\,|\,m+\frac{j+1}{2^{n-1}}\}$, where the options appear in their reduced form. The options are integers when $j=0$ and $n=1$; in that case, $x=\{m\,|\,m+1\}$, $b(x)=1+m+1=1+m+n$, and the theorem holds. Otherwise, between $j$ and $j+1$, one of the numbers is even and the other is odd. Therefore, one of the fractions should reduce, but the other should not. The irreducible fraction is the fundamental option in terms of determining the birthday of $x$ and, by induction, its birthday is equal to $m+1+n-1$. Thus, $b(x)=1+m+1+n-1=m+1+n$ and the proof is complete.
\end{proof}

When playing a disjunctive sum of games, it is important to avoid playing on numbers. This is because numbers represent guaranteed moves that should only be used in the endgames. This assertion is codified in Theorem \ref{th:avoidance}, which is also necessary in this document.

\begin{theorem}[Number Avoidance Theorem, \cite{Sie013}, page 78] \label{th:avoidance} Suppose that $x$ is equal to a
number and $G$ is not, and let $H$ be an arbitrary game. If Left (resp. Right)
has a winning move on $G + H + x$, then she (he) has a winning move of the form
$(G + H)^L + x$ (resp. $(G + H)^R + x$).
\end{theorem}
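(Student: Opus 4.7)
My plan is to prove Theorem \ref{th:avoidance} by strong induction on the total formal birthday $b(G) + b(H) + b(x)$. First, I would assume Left has a winning first move on $G + H + x$ and reduce to the case where the move lies on $x$, say $x \to x^L$, giving $G + H + x^L \geq 0$ (otherwise the move is already of the claimed form and there is nothing to prove).

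The first key step is to exploit that $x$ is a number: its Left options satisfy $x^L < x$ strictly, so $x - x^L$ is a positive number. Adding this positive number to $G + H + x^L \geq 0$ yields $G + H + x > 0$ by monotonicity of addition. Consequently Left wins $G + H + x$ whether she moves first or second, and in particular Right has no refuting reply.

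The second key step is to upgrade this second-player victory into a winning first move that stays on $G + H$. I would analyze Left's responses to Right's possible moves: against a Right reply of the form $(G + H)^R + x$, Left has a winning counter since $G + H + x > 0$, and by the inductive hypothesis at a strictly smaller total birthday, this counter is off of $x$; against a Right reply $G + H + x^R$ (if any), note $x^R > x$ forces $G + H + x^R > G + H + x > 0$, and the inductive hypothesis again gives a winning Left counter off of $x^R$. Running this ``second-player strategy that never touches $x$'' as a first-player schedule on $G + H + x$ should then produce the desired $(G + H)^L + x \geq 0$.

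The main obstacle will be justifying that the first move of the extracted schedule is genuinely a Left option of $G + H$, rather than a response only legitimized further down the tree. The non-number hypothesis on $G$ enters precisely here: it prevents the recursion from collapsing into a pure-number situation where trivial arithmetic replaces the combinatorial argument. A clean way to bypass this difficulty is to reformulate the inductive claim in terms of Left-stops, proving $\LS{G + H + x} = \LS{G + H} + x$ (and the symmetric statement for $\RS{\cdot}$), which isolates the monotonicity content from the delicate reversibility bookkeeping and yields Number Avoidance as a direct corollary.
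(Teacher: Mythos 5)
The paper itself offers no proof of this theorem --- it is quoted verbatim from Siegel --- so your attempt must stand on its own, and it does not. The reduction in your first step is fine (after the normalization, which you should state, that $x$ is taken in canonical form so that $x^L$ is a number with $x^L<x$): if Left's winning move is on $x$, then $G+H+x>0$. But everything that remains, namely that $G+H+x>0$ with $G$ not a number forces some $(G+H)^L+x\geqslant 0$, \emph{is} the theorem, and neither of your two routes establishes it. ``Running the second-player strategy as a first-player schedule'' is not a valid inference: a second-player strategy prescribes responses to Right's moves and has no opening move to extract, and there is no general principle converting a second-player win into a first-player winning move confined to a chosen component. Your proposed clean bypass via stops cannot work for a structural reason: the identity $\LS{K+x}=\LS{K}+x$ holds for \emph{every} game $K$, whereas the avoidance conclusion is false when the non-number hypothesis is dropped. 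Take $K=1=\{0\mid\,\}$ and $x=-\tfrac12$: then $K+x=\tfrac12>0$, yet the only option $K^L+x=-\tfrac12<0$, and Left's unique winning move is on $x$. Hence no argument using only the stop-translation identity can prove the theorem. Concretely, from $\LS{G+H}+x\geqslant 0$ one gets only $\RS{(G+H)^{L_0}+x}\geqslant 0$ for the stop-optimal option $L_0$, and $\RS{\,\cdot\,}\geqslant 0$ does not imply $\geqslant 0$ (consider $\cgstar$). The boundary case $\LS{G+H}+x=0$ is exactly where the non-number hypothesis on $G$ must be used combinatorially, and your sketch never uses it.

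There is a secondary flaw in the induction as set up: to apply the inductive hypothesis to Right's reply $(G+H)^R+x$, say $G^R+H+x$, you must exhibit a non-number component of $G^R+H$, but $G^R$ may be a number (e.g.\ $G=\{1\mid 0\}$) and $H$ may be one too, so the hypotheses of the statement need not be inherited by the positions you recurse into. The standard treatments (Siegel's proof, or Conway's) close the main gap by a simultaneous induction with the Number Translation Principle (Theorem \ref{th:translation} of this paper), in which the assumption that $G$ is not a number is invoked explicitly to produce the required Left option of $G+H$; any correct write-up must contain such a step.
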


When two players play a game, they eventually reach a position whose value is a number. Naturally, Left attempts to have this number be as large as possible, while Right wants it to be as small as possible. The number arrived at when Left moves first and plays optimally is called the Left Stop (LS), while the number reached when Right moves first and plays optimally is called the Right Stop (RS). The facts offered by Theorem \ref{th:stops} will be used later in this paper.

\begin{theorem}[\cite{Sie013}, pages 75 and 77] \label{th:stops}
Let $G$ and $H$ be short games. Let $x$ be a number.
\begin{enumerate}
  \item $\LSI{-G}=-\RSI{G}$ and $\RSI{-G}=-\LSI{G}$;
    \item $\LSI{G}\geqslant \RSI{G}$;
  \item $\LSI{G}<x\implies G<x$ and $\RSI{G}>x\implies G>x$;
    \item $\LSI{G+x}=\LSI{G}+x$ and $\RSI{G+x}=\RSI{G}+x$;
    \item $\LSI{G}+\RSI{H}\leqslant \LSI{G+H}\leqslant \LSI{G}+\LSI{H}$;
  \item $\RSI{G}+\RSI{H}\leqslant \RSI{G+H}\leqslant \LSI{G}+\RSI{H}$.
\end{enumerate}
\end{theorem}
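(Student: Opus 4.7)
The plan is to establish all six items by simultaneous induction on birthday (and, for the items involving sums, on $b(G)+b(H)$), working from the recursive definitions
\[
\LS{G}=\max_{G^L\in \GL}\RS{G^L},\qquad \RS{G}=\min_{G^R\in \GR}\LS{G^R},
\]
with $\LS{G}=\RS{G}=G$ when $G$ is a number. The subtlety is sequencing the six statements to avoid circularity; a convenient order is (1), the upper bounds of (5) and (6), (2), (4), (3), and finally the lower bounds of (5) and (6).

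Item (1) is immediate from $(-G)^L=\{-H:H\in\GR\}$ and the inductive hypothesis. The upper bounds of (5) and (6) unfold the recursion: for (5),
\[
\LS{G+H}=\max\Bigl(\max_{G^L}\RS{G^L+H},\,\max_{H^L}\RS{G+H^L}\Bigr),
\]
and each inner term is at most $\LS{G}+\LS{H}$ by the inductively available upper bound of (6); the case for (6) is symmetric. Item (2) then follows immediately from the upper bound of (5) applied to $G+(-G)=0$: we have $0=\LS{0}\leq \LS{G}+\LS{-G}=\LS{G}-\RS{G}$ using item (1).

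Item (4) follows from the Number Avoidance Theorem (Theorem \ref{th:avoidance}): optimal play in $G+x$ never chooses the number component while $G$ is not a number, so by inductive application of (4) we get $\LS{G+x}=\max_{G^L}\RS{G^L+x}=\max_{G^L}(\RS{G^L}+x)=\LS{G}+x$, and similarly for $\RS{}$. Item (3) then follows from (4) together with (1): $\LS{G}<x$ yields $\LS{G-x}<0$, meaning even with Left to move on $G-x$, Right can drive the play to a negative number, so $G-x<0$. Finally, the lower bounds in (5) and (6) are obtained by picking the optimal Left (resp.\ Right) option on the component that realises the stop and invoking the inductive hypothesis; for instance, if $G^L$ achieves $\RS{G^L}=\LS{G}$, then $\LS{G+H}\geq \RS{G^L+H}\geq \RS{G^L}+\RS{H}=\LS{G}+\RS{H}$ by the inductively available lower bound of (6).

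The main obstacle is the book-keeping that keeps the induction non-circular between (2), (5), and (6); once the ordering above is fixed, each step only invokes items already proved or strictly smaller instances of itself, and the remaining arithmetic is forced by the recursive definitions and the Number Avoidance Theorem.
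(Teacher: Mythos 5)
The paper does not actually prove this theorem --- it quotes it from Siegel --- so your proposal stands or falls on its own. The plan (simultaneous induction on the recursive definition of the stops) is a legitimate route, but as written it contains a genuine circularity centred on the base case of that recursion. The identity $\LS{G+H}=\max\bigl(\max_{G^L}\RS{G^L+H},\,\max_{H^L}\RS{G+H^L}\bigr)$ is only valid when $G+H$ is \emph{not} equal to a number: for $G=\{0\mid\ \}=1$ and $H=-1$ the right-hand side is $-1$ while $\LS{G+H}=\LS{0}=0$. Your derivation of item (2) applies the upper bound of (5) to precisely the pair $(G,-G)$, whose sum is the number $0$, i.e.\ to exactly the case your unfolding of the recursion does not cover. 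Worse, that case is \emph{equivalent} to (2): if $G+H=x$ is a number, then by (1) and (4) one has $\LS{H}=x-\RS{G}$, so $x\leqslant \LS{G}+\LS{H}$ holds if and only if $\LS{G}\geqslant\RS{G}$. You therefore need an independent proof of (2); the standard one goes through (3) (if $\LS{G}<x<\RS{G}$ for some number $x$, then $G<x$ and $G>x$, a contradiction), which forces (3) to the front of your ordering rather than the back.

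That exposes the second gap: your argument for (3) --- ``$\LS{G-x}<0$, meaning Right can drive the play to a negative number, so $G-x<0$'' --- asserts the statement rather than proving it; the link between a negative stop and Right winning is the entire content of (3), and it needs its own strategic induction (simultaneously: $\LS{K}<0$ implies Left loses moving first in $K$, and $\RS{K}<0$ implies Left loses moving second). Two smaller repairs: for (4) the tool you want is the Number Translation Principle (Theorem \ref{th:translation}), which lets you take $G^L+x$ as the Left options of $G+x$; Number Avoidance concerns winning moves, not stops, and does not by itself discard the options $G+x^L$ from the maximum. And the ``number'' base cases of (5) and (6) --- where $G$, $H$, or $G+H$ is equal to a number --- must be dispatched explicitly via (1), (2) and (4). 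With the order (1), (3), (2), (4), then the bounds in (5)--(6), the induction closes and the rest of your computation is sound.
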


A game $G$ is \emph{cold} if it is a number, \emph{tepid} if $\LS{G}=\RS{G}$ but $G$ is not a number, and \emph{hot} if $\LS{G}>\RS{G}$. When a game is tepid and $\LS{G}=\RS{G}=0$, it is an \emph{infinitesimal}. When a game is tepid and $\LS{G}=\RS{G}\neq 0$, it is a translation of an infinitesimal, that is, $G=x+\epsilon$, where $x$ is a nonzero number and $\epsilon$ is an infinitesimal. These are the possible ``natures'' of short games. Therefore, our proofs will go through these cases.

Nimbers are an important class of infinitesimals. The Sprague-Grundy theorem states that every impartial position played under normal play convention is equivalent to a single \textsc{nim} heap \citep{Gru39, Spra35}. As a result, each impartial position has a Grundy value, a nonnegative integer ${\cal G}(G)$, representing the size of the corresponding \textsc{nim} heap. The game value of an impartial position whose Grundy value is $n$ is the nimber $*n=\{0,\ldots,*(n-1)\,|\,0 ,\ldots,*(n-1)\}$. Regarding the disjunctive sum, we have $*n+*m=*(n\oplus m)$, where $+$ is the disjunctive sum and $\oplus$ is the \textsc{nim} sum, i.e., addition of the binary representations of $n$ and $m$ without carrying the one.

The structures $(\{0,\ldots,2^{k}-1\},\oplus)\cong \bigoplus_{i=1}^k\mathbb{Z}_2$ are finite groups (if $k=0$ we have the singleton group $\{0\}$). Table~\ref{tab:table} displays the algebra of the first five groups ($k\leqslant 4$). Considering binary representations, the \textsc{nim} sum never increases the number of digits (no carry), so, for any $n,m\in\{0,\ldots,2^{k}-1\}$, we have $n\oplus m<2^{k}$. That property leads to some important facts, expressed in Theorem \ref{th:nimsum}.

\begin{theorem}\label{th:nimsum} Let $j, k,n$ be nonnegative integers.
\begin{itemize}
\item $b(*j) = j$;
\item If $0\leqslant j\leqslant k$ and $*j+*k=*(2^n)$, then $k\geqslant 2^n$;
\item If $G+H=*(2^n)$ is a decomposition, then neither $G$ nor $H$ is a nimber.
\end{itemize}
\end{theorem}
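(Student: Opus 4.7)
The three bullets naturally stack, with each later bullet using the earlier ones, so I would tackle them in the stated order.

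For the first bullet, I would use a straightforward induction on $j$. The base case $b(*0) = b(0) = 0$ is immediate. For $j \geq 1$, the canonical form of $*j$ has $\{*0, *1, \ldots, *(j-1)\}$ on both the Left and Right sides (these are pairwise incomparable and none dominates), so $b(*j) = 1 + \max_{0 \leq i \leq j-1} b(*i) = 1 + (j-1) = j$ by the inductive hypothesis.

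For the second bullet I would work at the level of binary representations. By the Sprague--Grundy formula for nimber addition, $*j + *k = *(j \oplus k)$, so the hypothesis gives $j \oplus k = 2^n$. Assume for contradiction that $k < 2^n$; then since $j \leq k$ we also have $j < 2^n$, so neither $j$ nor $k$ has any bit set in position $n$ or higher. Then the XOR $j \oplus k$ also has no bit set in position $n$ or higher, so $j \oplus k < 2^n$, contradicting $j \oplus k = 2^n$. Hence $k \geq 2^n$, as required.

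For the third bullet I would argue by contradiction, chaining the previous two bullets together. Suppose that $G + H = *(2^n)$ is a decomposition (so $b(G), b(H) < 2^n$ by Theorem \ref{th:brnumbers} applied via the first bullet, which gives $b(*(2^n)) = 2^n$) and that $G$ is a nimber, say $G = *j$. Since nimbers are self-inverse ($*j + *j = 0$), the equation $G + H = *(2^n)$ forces $H$ to have the value $*(2^n) + *j = *(2^n \oplus j)$, so $H$ is also (equivalent to, hence in canonical form equal to) the nimber $*(2^n \oplus j)$. Writing $k = 2^n \oplus j$ and swapping $j$ and $k$ if necessary so that $j \leq k$, the second bullet gives $k \geq 2^n$. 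But by the first bullet $b(*k) = k \geq 2^n$, contradicting the requirement $b(H) < 2^n$ from decomposition. By the symmetric argument the same is true of $H$, completing the proof.

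The only mild subtlety I anticipate is justifying in the third bullet that $H$'s canonical form is literally a nimber (and not merely equivalent to one) so that its birthday really equals $k$; this is immediate since the canonical form is unique up to isomorphism, but is worth stating explicitly. Everything else is a direct bit-level computation or a one-line induction.
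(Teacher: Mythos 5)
Your proposal is correct and follows essentially the same route as the paper: induction on the canonical form for the first bullet, the no-carry property of the \textsc{nim} sum for the second, and the self-inverse property of nimbers to reduce the third bullet to the first two. (One trivial slip: the fact $b(*(2^n))=2^n$ comes from the first bullet, not from Theorem \ref{th:brnumbers}, which concerns numbers.)
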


\begin{proof} If $j=0$, then $b(j)=b(0)=0$. If $j\neq0$, then, due to the fact that the canonical form of $*j$ is $\{0,\ldots,*(j-1)\,|\,0,\ldots,*(j-1)\}$, we have\linebreak $b(*j)=1+\max\{b(0),\ldots,b(*(j-1))\}$. Therefore, by induction, we have $b(*j)=1+\max\{0,\ldots,j-1\}=j$.

Regarding item 2, note that when the nonnegative integers are written in binary, the \textsc{nim} sum never increases the number of digits. Thus, for any $j,k\in\{0,\ldots,2^{n}-1\}$, we have $j\oplus k<2^{n}$. Since that $*j+*k=*(2^n)$, either $j$ or $k$ must be greater than or equal to $2^n$. As the assumption is $0\leqslant j\leqslant k$, it follows that $k$ must be greater than or equal to $2^n$.

Finally, let $G+H=*(2^n)$ be a decomposition. If $G$ is a nimber, then $H$ is the sum of two nimbers, i.e., it is also a nimber. Thus, for some $j,k$, $G=*j$ and $H=*k$, and we may assume that $0\leqslant j\leqslant k$. By items 1 and 2, we have $k\geqslant 2^n$ and $b(*k)\geqslant b(*(2^n))$. Hence, $*j+*k = *(2^n)$ is not a decomposition, and that is a contradiction. Item 3 is also proved.
\end{proof}

 \begin{table}[!htbp]
\begin{center}
\newcommand\width{5}
\renewcommand\arraystretch{1.4}
\scalebox{0.8}{
\begin{tabular}{ccccccccccccccccc}
\hlineB{\width}
\multicolumn{1}{V{\width}c}{$\pmb{\oplus}$}&\multicolumn{1}{|cV{\width}}{$\pmb{0}$}&\multicolumn{1}{cV{\width}}{$\pmb{1}$}&\multicolumn{1}{c|}{$\pmb{2}$}&\multicolumn{1}{cV{\width}}{$\pmb{3}$}&\multicolumn{1}{c|}{$\pmb{4}$}&\multicolumn{1}{c|}{$\pmb{5}$}&\multicolumn{1}{c|}{$\pmb{6}$}&\multicolumn{1}{cV{\width}}{$\pmb{7}$}&\multicolumn{1}{c|}{$\pmb{8}$}&\multicolumn{1}{c|}{$\pmb{9}$}&\multicolumn{1}{c|}{$\pmb{10}$}&\multicolumn{1}{c|}{$\pmb{11}$}&\multicolumn{1}{c|}{$\pmb{12}$}&\multicolumn{1}{c|}{$\pmb{13}$}&\multicolumn{1}{c|}{$\pmb{14}$}&\multicolumn{1}{cV{\width}}{$\pmb{15}$}\\
\hline
\multicolumn{1}{V{\width}c}{$\pmb{0}$}&\multicolumn{1}{|cV{\width}}{$0$}&\multicolumn{1}{cV{\width}}{$1$}&\multicolumn{1}{c|}{$2$}&\multicolumn{1}{cV{\width}}{$3$}&\multicolumn{1}{c|}{$4$}&\multicolumn{1}{c|}{$5$}&\multicolumn{1}{c|}{$6$}&\multicolumn{1}{cV{\width}}{$7$}&\multicolumn{1}{c|}{$8$}&\multicolumn{1}{c|}{$9$}&\multicolumn{1}{c|}{$10$}&\multicolumn{1}{c|}{$11$}&\multicolumn{1}{c|}{$12$}&\multicolumn{1}{c|}{$13$}&\multicolumn{1}{c|}{$14$}&\multicolumn{1}{cV{\width}}{$15$}\\
\clineB{1-2}{\width}\clineB{3-17}{1}
\multicolumn{1}{V{\width}c}{$\pmb{1}$}&\multicolumn{1}{|c|}{$1$}&\multicolumn{1}{cV{\width}}{$0$}&\multicolumn{1}{c|}{$3$}&\multicolumn{1}{cV{\width}}{$2$}&\multicolumn{1}{c|}{$5$}&\multicolumn{1}{c|}{$4$}&\multicolumn{1}{c|}{$7$}&\multicolumn{1}{cV{\width}}{$6$}&\multicolumn{1}{c|}{$9$}&\multicolumn{1}{c|}{$8$}&\multicolumn{1}{c|}{$11$}&\multicolumn{1}{c|}{$10$}&\multicolumn{1}{c|}{$13$}&\multicolumn{1}{c|}{$12$}&\multicolumn{1}{c|}{$15$}&\multicolumn{1}{cV{\width}}{$14$}\\
\clineB{1-3}{\width}\clineB{4-17}{1}
\multicolumn{1}{V{\width}c}{$\pmb{2}$}&\multicolumn{1}{|c|}{$2$}&\multicolumn{1}{c|}{$3$}&\multicolumn{1}{c|}{$0$}&\multicolumn{1}{cV{\width}}{$1$}&\multicolumn{1}{c|}{$6$}&\multicolumn{1}{c|}{$7$}&\multicolumn{1}{c|}{$4$}&\multicolumn{1}{cV{\width}}{$5$}&\multicolumn{1}{c|}{$10$}&\multicolumn{1}{c|}{$11$}&\multicolumn{1}{c|}{$8$}&\multicolumn{1}{c|}{$9$}&\multicolumn{1}{c|}{$14$}&\multicolumn{1}{c|}{$15$}&\multicolumn{1}{c|}{$12$}&\multicolumn{1}{cV{\width}}{$13$}\\
\hline
\multicolumn{1}{V{\width}c}{$\pmb{3}$}&\multicolumn{1}{|c|}{$3$}&\multicolumn{1}{c|}{$2$}&\multicolumn{1}{c|}{$1$}&\multicolumn{1}{cV{\width}}{$0$}&\multicolumn{1}{c|}{$7$}&\multicolumn{1}{c|}{$6$}&\multicolumn{1}{c|}{$5$}&\multicolumn{1}{cV{\width}}{$4$}&\multicolumn{1}{c|}{$11$}&\multicolumn{1}{c|}{$10$}&\multicolumn{1}{c|}{$9$}&\multicolumn{1}{c|}{$8$}&\multicolumn{1}{c|}{$15$}&\multicolumn{1}{c|}{$14$}&\multicolumn{1}{c|}{$13$}&\multicolumn{1}{cV{\width}}{$12$}\\
\clineB{1-5}{\width}\clineB{6-17}{1}
\multicolumn{1}{V{\width}c}{$\pmb{4}$}&\multicolumn{1}{|c|}{$4$}&\multicolumn{1}{c|}{$5$}&\multicolumn{1}{c|}{$6$}&\multicolumn{1}{c|}{$7$}&\multicolumn{1}{c|}{$0$}&\multicolumn{1}{c|}{$1$}&\multicolumn{1}{c|}{$2$}&\multicolumn{1}{cV{\width}}{$3$}&\multicolumn{1}{c|}{$12$}&\multicolumn{1}{c|}{$13$}&\multicolumn{1}{c|}{$14$}&\multicolumn{1}{c|}{$15$}&\multicolumn{1}{c|}{$8$}&\multicolumn{1}{c|}{$9$}&\multicolumn{1}{c|}{$10$}&\multicolumn{1}{cV{\width}}{$11$}\\
\hline
\multicolumn{1}{V{\width}c}{$\pmb{5}$}&\multicolumn{1}{|c|}{$5$}&\multicolumn{1}{c|}{$4$}&\multicolumn{1}{c|}{$7$}&\multicolumn{1}{c|}{$6$}&\multicolumn{1}{c|}{$1$}&\multicolumn{1}{c|}{$0$}&\multicolumn{1}{c|}{$3$}&\multicolumn{1}{cV{\width}}{$2$}&\multicolumn{1}{c|}{$13$}&\multicolumn{1}{c|}{$12$}&\multicolumn{1}{c|}{$15$}&\multicolumn{1}{c|}{$14$}&\multicolumn{1}{c|}{$9$}&\multicolumn{1}{c|}{$8$}&\multicolumn{1}{c|}{$11$}&\multicolumn{1}{cV{\width}}{$10$}\\
\hline
\multicolumn{1}{V{\width}c}{$\pmb{6}$}&\multicolumn{1}{|c|}{$6$}&\multicolumn{1}{c|}{$7$}&\multicolumn{1}{c|}{$4$}&\multicolumn{1}{c|}{$5$}&\multicolumn{1}{c|}{$2$}&\multicolumn{1}{c|}{$3$}&\multicolumn{1}{c|}{$0$}&\multicolumn{1}{cV{\width}}{$1$}&\multicolumn{1}{c|}{$14$}&\multicolumn{1}{c|}{$15$}&\multicolumn{1}{c|}{$12$}&\multicolumn{1}{c|}{$13$}&\multicolumn{1}{c|}{$10$}&\multicolumn{1}{c|}{$11$}&\multicolumn{1}{c|}{$8$}&\multicolumn{1}{cV{\width}}{$9$}\\
\hline
\multicolumn{1}{V{\width}c}{$\pmb{7}$}&\multicolumn{1}{|c|}{$7$}&\multicolumn{1}{c|}{$6$}&\multicolumn{1}{c|}{$5$}&\multicolumn{1}{c|}{$4$}&\multicolumn{1}{c|}{$3$}&\multicolumn{1}{c|}{$2$}&\multicolumn{1}{c|}{$1$}&\multicolumn{1}{cV{\width}}{$0$}&\multicolumn{1}{c|}{$15$}&\multicolumn{1}{c|}{$14$}&\multicolumn{1}{c|}{$13$}&\multicolumn{1}{c|}{$12$}&\multicolumn{1}{c|}{$11$}&\multicolumn{1}{c|}{$10$}&\multicolumn{1}{c|}{$9$}&\multicolumn{1}{cV{\width}}{$8$}\\
\clineB{1-9}{\width}\clineB{10-17}{1}
\multicolumn{1}{V{\width}c}{$\pmb{8}$}&\multicolumn{1}{|c|}{$8$}&\multicolumn{1}{c|}{$9$}&\multicolumn{1}{c|}{$10$}&\multicolumn{1}{c|}{$11$}&\multicolumn{1}{c|}{$12$}&\multicolumn{1}{c|}{$13$}&\multicolumn{1}{c|}{$14$}&\multicolumn{1}{c|}{$15$}&\multicolumn{1}{c|}{$0$}&\multicolumn{1}{c|}{$1$}&\multicolumn{1}{c|}{$2$}&\multicolumn{1}{c|}{$3$}&\multicolumn{1}{c|}{$4$}&\multicolumn{1}{c|}{$5$}&\multicolumn{1}{c|}{$6$}&\multicolumn{1}{cV{\width}}{$7$}\\
\hline
\multicolumn{1}{V{\width}c}{$\pmb{9}$}&\multicolumn{1}{|c|}{$9$}&\multicolumn{1}{c|}{$8$}&\multicolumn{1}{c|}{$11$}&\multicolumn{1}{c|}{$10$}&\multicolumn{1}{c|}{$13$}&\multicolumn{1}{c|}{$12$}&\multicolumn{1}{c|}{$15$}&\multicolumn{1}{c|}{$14$}&\multicolumn{1}{c|}{$1$}&\multicolumn{1}{c|}{$0$}&\multicolumn{1}{c|}{$3$}&\multicolumn{1}{c|}{$2$}&\multicolumn{1}{c|}{$5$}&\multicolumn{1}{c|}{$4$}&\multicolumn{1}{c|}{$7$}&\multicolumn{1}{cV{\width}}{$6$}\\
\hline
\multicolumn{1}{V{\width}c}{$\pmb{10}$}&\multicolumn{1}{|c|}{$10$}&\multicolumn{1}{c|}{$11$}&\multicolumn{1}{c|}{$8$}&\multicolumn{1}{c|}{$9$}&\multicolumn{1}{c|}{$14$}&\multicolumn{1}{c|}{$15$}&\multicolumn{1}{c|}{$12$}&\multicolumn{1}{c|}{$13$}&\multicolumn{1}{c|}{$2$}&\multicolumn{1}{c|}{$3$}&\multicolumn{1}{c|}{$0$}&\multicolumn{1}{c|}{$1$}&\multicolumn{1}{c|}{$6$}&\multicolumn{1}{c|}{$7$}&\multicolumn{1}{c|}{$4$}&\multicolumn{1}{cV{\width}}{$5$}\\
\hline
\multicolumn{1}{V{\width}c}{$\pmb{11}$}&\multicolumn{1}{|c|}{$11$}&\multicolumn{1}{c|}{$10$}&\multicolumn{1}{c|}{$9$}&\multicolumn{1}{c|}{$8$}&\multicolumn{1}{c|}{$15$}&\multicolumn{1}{c|}{$14$}&\multicolumn{1}{c|}{$13$}&\multicolumn{1}{c|}{$12$}&\multicolumn{1}{c|}{$3$}&\multicolumn{1}{c|}{$2$}&\multicolumn{1}{c|}{$1$}&\multicolumn{1}{c|}{$0$}&\multicolumn{1}{c|}{$7$}&\multicolumn{1}{c|}{$6$}&\multicolumn{1}{c|}{$5$}&\multicolumn{1}{cV{\width}}{$4$}\\
\hline
\multicolumn{1}{V{\width}c}{$\pmb{12}$}&\multicolumn{1}{|c|}{$12$}&\multicolumn{1}{c|}{$13$}&\multicolumn{1}{c|}{$14$}&\multicolumn{1}{c|}{$15$}&\multicolumn{1}{c|}{$8$}&\multicolumn{1}{c|}{$9$}&\multicolumn{1}{c|}{$10$}&\multicolumn{1}{c|}{$11$}&\multicolumn{1}{c|}{$4$}&\multicolumn{1}{c|}{$5$}&\multicolumn{1}{c|}{$6$}&\multicolumn{1}{c|}{$7$}&\multicolumn{1}{c|}{$0$}&\multicolumn{1}{c|}{$1$}&\multicolumn{1}{c|}{$2$}&\multicolumn{1}{cV{\width}}{$3$}\\
\hline
\multicolumn{1}{V{\width}c}{$\pmb{13}$}&\multicolumn{1}{|c|}{$13$}&\multicolumn{1}{c|}{$12$}&\multicolumn{1}{c|}{$15$}&\multicolumn{1}{c|}{$14$}&\multicolumn{1}{c|}{$9$}&\multicolumn{1}{c|}{$8$}&\multicolumn{1}{c|}{$11$}&\multicolumn{1}{c|}{$10$}&\multicolumn{1}{c|}{$5$}&\multicolumn{1}{c|}{$4$}&\multicolumn{1}{c|}{$7$}&\multicolumn{1}{c|}{$6$}&\multicolumn{1}{c|}{$1$}&\multicolumn{1}{c|}{$0$}&\multicolumn{1}{c|}{$3$}&\multicolumn{1}{cV{\width}}{$2$}\\
\hline
\multicolumn{1}{V{\width}c}{$\pmb{14}$}&\multicolumn{1}{|c|}{$14$}&\multicolumn{1}{c|}{$15$}&\multicolumn{1}{c|}{$12$}&\multicolumn{1}{c|}{$13$}&\multicolumn{1}{c|}{$10$}&\multicolumn{1}{c|}{$11$}&\multicolumn{1}{c|}{$8$}&\multicolumn{1}{c|}{$9$}&\multicolumn{1}{c|}{$6$}&\multicolumn{1}{c|}{$7$}&\multicolumn{1}{c|}{$4$}&\multicolumn{1}{c|}{$5$}&\multicolumn{1}{c|}{$2$}&\multicolumn{1}{c|}{$3$}&\multicolumn{1}{c|}{$0$}&\multicolumn{1}{cV{\width}}{$1$}\\
\hline
\multicolumn{1}{V{\width}c}{$\pmb{15}$}&\multicolumn{1}{|c|}{$15$}&\multicolumn{1}{c|}{$14$}&\multicolumn{1}{c|}{$13$}&\multicolumn{1}{c|}{$12$}&\multicolumn{1}{c|}{$11$}&\multicolumn{1}{c|}{$10$}&\multicolumn{1}{c|}{$9$}&\multicolumn{1}{c|}{$8$}&\multicolumn{1}{c|}{$7$}&\multicolumn{1}{c|}{$6$}&\multicolumn{1}{c|}{$5$}&\multicolumn{1}{c|}{$4$}&\multicolumn{1}{c|}{$3$}&\multicolumn{1}{c|}{$2$}&\multicolumn{1}{c|}{$1$}&\multicolumn{1}{cV{\width}}{$0$}\\
\hlineB{\width}
\end{tabular}
}
\caption{Finite groups $(\{0,\ldots,2^{k}-1\},\oplus), k\leqslant 4$.}
\label{tab:table}
\end{center}
\end{table}

In the main proofs of this document, we will have to analyze tepid components, that is, translations of infinitesimals. In that process, we will have to relate the birthdays and followers of these tepid components with the birthdays and followers of the related infinitesimals. Theorem \ref{th:translationrev}, Corollary \ref{th:birthnimber}, and Lemma \ref{lem:remoteness} all deal with this. These results are a direct consequence of the Number Translation Principle. Once again, although the proofs are straightforward, they are included here since they do not appear concisely in the literature.

\begin{theorem}[Number Translation Principle, \cite{Sie013}, page 78]\label{th:translation}
Suppose $x$ is equal to a number and $G$ is not. Then,
\[G+x=\{G^\mathcal{L}+x\,|\,G^\mathcal{R}+x\}.\]
\end{theorem}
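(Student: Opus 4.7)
The plan is to proceed by strong induction on $b(G)+b(x)$ and show that the extra options introduced by moves in $x$ are redundant. By the definition of the disjunctive sum, the game form $G+x$ has Left options $\{G^L+x\}\cup\{G+x^L\}$ and Right options $\{G^R+x\}\cup\{G+x^R\}$, whereas $H=\{G^L+x\,|\,G^R+x\}$ has only the first family on each side. It therefore suffices to show that the extra options $G+x^L$ and $G+x^R$ are either dominated or reversible in $G+x$, so that they may be deleted without changing the value.

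Focus on the extra Left option $G+x^L$; the Right-side argument is symmetric. Because $x$ is a number and $x^L$ is one of its Left options, $x-x^L$ is a strictly positive dyadic. Because $G$ is not a number, Theorem~\ref{th:stops} gives that $G$ is either hot ($\LS{G}>\RS{G}$) or tepid ($\LS{G}=\RS{G}$ with $G$ equal to a translation of a nonzero infinitesimal). In each case one exhibits a Right option $G^R$ of $G$ with $\LS{G^R}\leqslant\RS{G}$ and uses the stop inequalities (Theorem~\ref{th:stops}, items 4--6) together with the positivity of $x-x^L$ to derive the key inequality $G^R+x^L\leqslant G+x$. This inequality means that Right can reverse the Left option $G+x^L$ through the response $G^R+x^L$; the standard simplification rule then lets us replace that Left option by the Left options of $G^R+x^L$, which are of the form $G^{RL}+x^L$ or $G^R+x^{LL}$. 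Iterating, and using the inductive hypothesis on $b(x^L)<b(x)$, all such reversed options eventually collapse into options already present in $H$.

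The main obstacle is establishing the pivotal inequality $G^R+x^L\leqslant G+x$ uniformly across the three natures of a non-number game. Morally, this is the Number Avoidance Theorem (Theorem~\ref{th:avoidance}) in disguise: the positive dyadic $x-x^L$ compensates for the change from $G$ to $G^R$ because, for a non-number $G$, Right's optimal move $G^R$ satisfies $\LS{G^R}\leqslant\RS{G}$, so the ``loss'' $G-G^R$ is bounded either by the temperature of $G$ (hot case) or is infinitesimal (tepid case), in both cases dominated by any strictly positive translation. Once this inequality is settled by the case analysis, the symmetric treatment of the extra Right option $G+x^R$ and the induction deliver the equality $G+x=H$.
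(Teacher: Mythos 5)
You should first note that the paper does not prove Theorem~\ref{th:translation} at all: it is quoted from \cite{Sie013} as background, so there is no in-paper argument to match and your proposal must stand on its own. Your pivotal inequality is correct, and in fact needs no case split between hot and tepid: since a form with an empty option set on one side equals an integer, the non-number $G$ has Right options, and choosing $G^{R}$ with $\LS{G^R}=\RS{G}$, Theorem~\ref{th:stops} gives $\LS{G^R-G}\leqslant \LS{G^R}-\RS{G}=0<x-x^L$, whence $G^R+x^L<G+x$ by item~3. (One caveat: you need $x-x^L>0$, which can fail for a non-canonical form equal to a number, e.g.\ $x=\{1\,|\,-1\}=0$; since both sides of the identity depend only on the \emph{value} of $x$, you should first replace $x$ by its canonical form.)

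The genuine gap is your final step. Reversibility does not license deleting the Left option $G+x^L$; it licenses replacing it by the Left options of the reversing position $G^R+x^L$, namely the games $G^{RL}+x^L$ and $G^R+x^{LL}$. These are \emph{not} options of $H=\{G^\mathcal{L}+x\,|\,G^\mathcal{R}+x\}$, and the assertion that iterating the replacement makes them ``collapse into options already present in $H$'' is exactly what needs proof; the induction on $b(x^L)$ yields the value of $G+x^L$, not permission to discard it. The clean repair is to notice that the extra options are \emph{dominated}, not merely reversible: taking $G^{L_0}$ with $\RS{G^{L_0}}=\LS{G}$, the same stop computation gives $\LS{(G+x^L)-(G^{L_0}+x)}\leqslant \LS{G}-\RS{G^{L_0}}+x^L-x=x^L-x<0$, so $G+x^L<G^{L_0}+x$, and symmetrically $G+x^R>G^{R_0}+x$ for $G^{R_0}$ attaining the Right stop. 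Deleting dominated options then yields $H$ immediately, with no induction and no reversibility bookkeeping. (The textbook alternative, which is the route in the cited source, shows $G+x-H$ is a second-player win: every first move is answered by mirroring except a move in the $x$ component, and Theorem~\ref{th:avoidance} guarantees that a winning first move, if one existed, could be taken outside $x$.)
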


\begin{theorem}[Number Translation Principle Revisited]\label{th:translationrev}
Suppose that $x$ is a number, whereas the canonical form $G$ is not. Then, we can state the following:
\begin{enumerate}
  \item The canonical form of $x+G$ is $\{x+G^\mathcal{L}\,|\,x+G^\mathcal{R}\}$, where the elements of $x+G^\mathcal{L}$ and $x+G^\mathcal{R}$ are in canonical form;
  \item If $G'$ is a follower of $G$ that is not a number, then the canonical form of $x+G'$ is a follower of the canonical form of $x+G$.
\end{enumerate}
\end{theorem}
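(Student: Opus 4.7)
My plan is to prove Part 1 by strong induction on $b(G)$, then obtain Part 2 as a short corollary by iterating Part 1 along the game tree of $G$.

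For Part 1, I first invoke Theorem \ref{th:translation} to write $x + G = \{x + \GL \mid x + \GR\}$, and then verify the three canonical-form conditions for this expression. Each option is in canonical form: if $G^L$ is a number then so is $x + G^L$; otherwise the inductive hypothesis at the smaller-birthday non-number $G^L$ gives the canonical form of $x + G^L$ as $\{x + (G^L)^\mathcal{L} \mid x + (G^L)^\mathcal{R}\}$. For non-domination, $x + G^{L_1} \leqslant x + G^{L_2}$ cancels to $G^{L_1} \leqslant G^{L_2}$, contradicting canonicity of $G$.

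The main obstacle is non-reversibility of a left option $x + G^{L_1}$ (right options being symmetric). If $G^{L_1}$ is not a number, then by induction the right options of the canonical form of $x + G^{L_1}$ are the games $x + (G^{L_1})^R$, and $x + (G^{L_1})^R \leqslant x + G$ would give $(G^{L_1})^R \leqslant G$, contradicting non-reversibility of $G^{L_1}$ in $G$. The delicate case is $G^{L_1}$ a number: then the canonical form of $x + G^{L_1}$ can have a new right option $y''$ (e.g., when $G^{L_1}=0$ and $x=1/2$ the number $\{0\mid 1\}$ has right option $y''=1$) with no counterpart in $G^{L_1}$, so inductive reversibility gives nothing. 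The way around this is via the Left Stop: since $G^{L_1}$ is a non-dominated number left option, no other $G^{L_2}$ satisfies $G^{L_2} > G^{L_1}$, so by Theorem \ref{th:stops}(3) each $\RS{G^L} \leqslant G^{L_1}$; combined with the standard recurrence $\LS{G} = \max_L \RS{G^L}$ (for non-number $G$) this pins down $\LS{G} = G^{L_1}$. Since $y'' > y = x + G^{L_1}$, we obtain $y'' - x > G^{L_1} = \LS{G}$, and a second application of Theorem \ref{th:stops}(3) yields $G < y'' - x$, i.e., $y'' \not\leqslant x + G$. The right-option case is symmetric, using $\RS{G} = G^{R_1}$.

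For Part 2, I induct on the depth of $G'$ inside the game tree of $G$. The base case $G' = G$ is immediate. Otherwise $G'$ is a follower of some immediate option of $G$; since every follower of a number is a number and $G'$ is not a number, this intermediate option cannot be a number either, so Part 1 places its canonical form as a direct option (hence follower) of the canonical form of $x + G$, and induction on the shorter game-tree path below this option finishes the argument.
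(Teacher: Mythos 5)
Your proposal is correct and follows the same overall structure as the paper: Part~1 via the Number Translation Principle and Part~2 by induction through a non-number intermediate option of $G$. The only difference is one of detail — the paper dismisses Part~1 as ``a direct consequence'' of Theorem~\ref{th:translation}, whereas you actually verify that the translated form is canonical (non-domination by cancellation, non-reversibility via the inductive hypothesis for non-number options and via the stop identity $\LSI{G}=G^{L_1}$ for number options), which is a genuine and correctly handled gap-filling rather than a change of method.
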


\begin{proof}
The first item is a direct consequence of the Number Translation Principle. Regarding the second item, if $G' = G$, the result is verified since a game is a follower of itself. Hence, let us assume first that $G' \in G^\mathcal{L}\cup G^\mathcal{R}$. The first item implies that the canonical form of $x + G'$ is an option of the canonical form of $x+G$, and the result is also verified. On the other hand, if $G' \not\in G^\mathcal{L}\cup G^\mathcal{R}$, then $G'$ must be a follower of some $G^L$ or $G^R$  that is not a number. By induction, we know that the canonical form of $x + G'$ is a follower of the canonical form of $x + G^L$ or $x + G^R$. Thus, the canonical form of $x+G'$ is a follower of the canonical form of $x+G$. The proof is complete.
\end{proof}

\begin{corollary}\label{th:birthnimber}
If $x$ is a number and $*n$ is a nimber, then $b(x+*n)=b(x)+ n$.
\end{corollary}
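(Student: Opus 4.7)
The plan is to induct on $n$. The base case $n=0$ is immediate: $*0 = 0$, so $x + *0 = x$, and $b(x+*0) = b(x) = b(x) + 0$.

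For the inductive step, assume $n \geq 1$ and the result holds for all smaller nonnegative indices. Since $n \geq 1$, the nimber $*n$ is an infinitesimal and not a number, so Theorem \ref{th:translationrev} applies. Its canonical form is $\{0, *1, \ldots, *(n-1) \mid 0, *1, \ldots, *(n-1)\}$, and item 1 of Theorem \ref{th:translationrev} gives that the canonical form of $x + *n$ is
\[
\{x, x + *1, \ldots, x + *(n-1) \mid x, x + *1, \ldots, x + *(n-1)\},
\]
with each option written in canonical form.

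Hence $b(x+*n) = 1 + \max_{0 \leq j \leq n-1} b(x + *j)$. By the induction hypothesis (and the trivial case $j=0$), $b(x + *j) = b(x) + j$ for every $j$ with $0 \leq j \leq n-1$. The maximum is therefore attained at $j = n-1$ and equals $b(x) + (n-1)$, so
\[
b(x + *n) = 1 + b(x) + (n-1) = b(x) + n.
\]

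There is no real obstacle; the only subtlety is ensuring the hypotheses of Theorem \ref{th:translationrev} are met (which requires $*n$ itself to be in canonical form and not a number, handled by splitting off the $n=0$ case) and that item 1 of that theorem already guarantees the listed options are canonical, so we may read off the birthday directly as one more than the maximum option birthday.
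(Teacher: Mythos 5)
Your proof is correct and follows essentially the same route as the paper: apply item 1 of Theorem \ref{th:translationrev} to read off the canonical form of $x+*n$ as $\{x+0,\ldots,x+*(n-1)\,|\,x+0,\ldots,x+*(n-1)\}$ with canonical options, then compute $b(x+*n)=1+\max_j b(x+*j)=b(x)+n$ by induction. The only difference is that you make the $n=0$ base case explicit, which the paper leaves implicit.
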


\begin{proof}
By Theorem \ref{th:translationrev}, the canonical form of $x+G$ is $\{x+G^\mathcal{L}\,|\,x+G^\mathcal{R}\}$, where the elements of $x+G^\mathcal{L}$ and $x+G^\mathcal{R}$ are in canonical form. Thus, $b(x+*n)=1+\max\{b(x+0),\ldots,b(x+*(n-1))\}$, and, by induction, $b(x+*n)=1+\max\{b(x)+0,\ldots,b(x)+n-1\}=b(x)+n$.
\end{proof}

\begin{observation}
In general, it is possible to have $b(x+G)<b(G)$. For example, this happens if x=$\frac{1}{2}$ and $G=\pm \frac{1}{2}$.
\end{observation}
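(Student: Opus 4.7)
The plan is to verify the example by direct computation, using only the birthday formula for numbers (Theorem~\ref{th:brnumbers}) and the Number Translation Principle (Theorem~\ref{th:translation}). Since $G=\pm\frac{1}{2}=\{\frac{1}{2}\mid -\frac{1}{2}\}$, and each of $\pm\frac{1}{2}$ as a number has birthday $2$ by Theorem~\ref{th:brnumbers} (because $\frac{1}{2}=0+\frac{1}{2^{1}}$, so its birthday is $0+1+1=2$), I would first conclude that $b(G)=1+\max\{b(\tfrac{1}{2}),b(-\tfrac{1}{2})\}=3$.

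Next, because the Left and Right stops of $G$ are $\frac{1}{2}$ and $-\frac{1}{2}$ respectively, $G$ is not a number, so Theorem~\ref{th:translation} applies with $x=\frac{1}{2}$, yielding
\[
x+G=\left\{\tfrac{1}{2}+\tfrac{1}{2}\,\middle|\,\tfrac{1}{2}-\tfrac{1}{2}\right\}=\{1\mid 0\}.
\]
I would then verify that $\{1\mid 0\}$ is already in canonical form: there is a single option on each side so there is no domination to apply; and neither option is reversible because $1=\{0\mid\,\}$ has no Right option (so the Left option $1$ cannot reverse through a Right option) and $0$ has no Left option (so the Right option $0$ cannot reverse through a Left option). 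Therefore $b(x+G)=1+\max\{b(1),b(0)\}=1+1=2$.

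Comparing, we obtain $b(x+G)=2<3=b(G)$, establishing the observation. The only mildly non-routine step is the canonical-form check on $\{1\mid 0\}$, but it collapses immediately because each side has a single option and no reversibility arises; after that, the observation is just the numerical inequality $2<3$.
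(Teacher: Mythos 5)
Your verification is correct and is exactly the direct computation the paper leaves implicit (the observation is stated without proof): $b(\pm\tfrac{1}{2})=3$ via Theorem~\ref{th:brnumbers}, $\tfrac{1}{2}+\pm\tfrac{1}{2}=\{1\mid 0\}$ via Theorem~\ref{th:translation}, and $b(\{1\mid 0\})=2<3$. The canonical-form check on $\{1\mid 0\}$ is also right, so nothing is missing.
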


Another useful fact is the notion of remoteness. We shall see that as long as $n$ is big enough the exact $*n$ is immaterial with respect to certain disjunctive sums. With respect to decompositions, Theorem \ref{th:remoteness} and Lemma \ref{lem:remoteness} will allow us to establish useful connections between components and nimbers.

\begin{definition}\label{def:remoteness}
A nimber $*n$ is remote for $G$ if it is not equal to any
follower of $G$.
\end{definition}

\begin{theorem}[Norton Remoteness Theorem, \cite{Sie013}, page 138]\label{th:remoteness}Suppose $*n_0$ is remote for $G$. Then, for all $n>n_0$,
$o(G+*n) = o(G+*n_0)$.
\end{theorem}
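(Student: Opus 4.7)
I would prove this by a double induction: outer on the birthday $b(G)$, inner on $n - n_0$. For fixed $G$ with $*n_0$ remote, the target is to show both $L(G + *n) = L(G + *n_0)$ and $R(G + *n) = R(G + *n_0)$, where $L(X)$ and $R(X)$ indicate whether Left (resp.\ Right) wins $X$ moving first. First I would note that remoteness descends to options: every follower of $\GL$ or $\GR$ is also a follower of $G$, so $*n_0$ is remote for each option of $G$. Consequently the outer IH yields $o(\GL + *n) = o(\GL + *n_0)$ (and analogously for $\GR$), while the inner IH gives $o(G + *k) = o(G + *n_0)$ for each $n_0 \leqslant k < n$.

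Next I would enumerate the options of $G + *n$ and $G + *n_0$ using Theorem~\ref{th:translation}, comparing the $R$-values reached by each Left move. The lists agree on the options shared by both sums; and by the inductive hypotheses, the extra Left options of $G + *n$ (namely $G + *k$ for $n_0 \leqslant k < n$) all contribute exactly the $R$-value $R(G + *n_0)$. A short bookkeeping then produces
\[
L(G + *n) = 1 \iff L(G + *n_0) = 1 \ \text{or}\ R(G + *n_0) = 0,
\]
together with its symmetric counterpart for $R$. Running through the four outcome classes, this forces $o(G + *n) = o(G + *n_0)$ in every case except $G + *n_0 \in \P$, where the formulas instead produce $G + *n \in \N$.

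The hard part is therefore to rule out $G + *n_0 \in \P$. I would invoke the standard fact that, for short games, $X \in \P$ is equivalent to $X = 0$ as a game value (the conditions $L(X) = 0$ and $R(X) = 0$ translate to $X \leqslant 0$ and $X \geqslant 0$ respectively). Then $G + *n_0 \in \P$ would force $G = *n_0$ as a value, making $*n_0$ equal to the follower $G$ of itself, which contradicts remoteness. With this case excluded, the induction closes and the theorem follows.
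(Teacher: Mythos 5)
The paper does not actually prove this theorem --- it is quoted from \cite{Sie013} as background --- so your argument stands or falls on its own, and it stands: it is a correct, self-contained proof. The double induction is sound; remoteness is inherited by options because every follower of an option of $G$ is a follower of $G$; the bookkeeping identity $L(G+*n)=1 \iff \bigl(L(G+*n_0)=1 \text{ or } R(G+*n_0)=0\bigr)$, and its mirror for $R$, does follow from the two inductive hypotheses once you note that the ``extra'' Left options $G+*k$ ($n_0\leqslant k<n$) all have the outcome of $G+*n_0$; and the single bad case $G+*n_0\in\P$ is correctly excluded, since $\P$ is equivalent to value $0$, which would force $G=*n_0$, contradicting remoteness under the paper's convention (used explicitly in the proof of Theorem~\ref{th:translationrev}) that a game is a follower of itself. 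One small correction: Theorem~\ref{th:translation} is the Number Translation Principle and applies only when the translating summand is a \emph{number}, so it is not the right citation for enumerating the options of $G+*n$. No theorem is needed there: by the definition of disjunctive sum, the Left options of $G+*n$ are the games $G^L+*n$ together with $G+*k$ for $0\leqslant k<n$, which is all your bookkeeping uses.
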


\begin{lemma}\label{lem:remoteness}Let $G'$ be an infinitesimal that is not a nimber and is in canonical form. Let $x$ be a number, and let $G$ be the canonical form of the tepid game $x+G'$. If $b(G)< k$ then $*(k-1)$ is remote for $G'$.
\end{lemma}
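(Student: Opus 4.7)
The plan is to argue the contrapositive: I assume that $*(k-1)$ is a follower of $G'$ and aim to derive $b(G)\geqslant k$, contradicting the hypothesis. I begin by disposing of the edge case $k=1$: if $b(G)<1$ then $G=0$, which would force $G'=-x$ to be a number; but an infinitesimal is (by the paper's taxonomy) tepid and hence not a number, so the case $k=1$ is vacuous.

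For $k\geqslant 2$, the follower $*(k-1)$ is a nimber of positive index and therefore is not a number. Theorem~\ref{th:translationrev}(2) then applies with $G'$ in the role of its ``$G$'' and $*(k-1)$ in the role of its ``$G'$'', giving me that the canonical form of $x+*(k-1)$ is a follower of $G$; Corollary~\ref{th:birthnimber} lets me evaluate its birthday as $b(x)+(k-1)$. So $b(G)\geqslant b(x)+(k-1)$, which already gives $b(G)\geqslant k$ whenever $x\neq 0$, since in that case $b(x)\geqslant 1$ by Theorem~\ref{th:brnumbers}.

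The main (and admittedly small) obstacle is the sub-case $x=0$, in which the inequality above only delivers $b(G)\geqslant k-1$ and I am short by exactly one unit of birthday. My plan for this case is to exploit that $G=G'$ and that $G'\neq *(k-1)$ (because $G'$ is not a nimber), so $*(k-1)$ is a \emph{proper} follower of $G'$; a short induction on game trees shows that any proper follower of a short game has strictly smaller birthday, and this recovers $b(G)=b(G')>k-1$, as required. Combining the two sub-cases gives $b(G)\geqslant k$ unconditionally, completing the contradiction.
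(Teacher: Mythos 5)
Your proof is correct and follows essentially the same route as the paper's: pass to the contrapositive, use Theorem~\ref{th:translationrev} to see that the canonical form of $x+*(k-1)$ is a follower of $G$, and evaluate its birthday via Corollary~\ref{th:birthnimber}. You are in fact slightly more careful than the paper, which skips the $k=1$ edge case and asserts ``proper follower'' outright, whereas you handle the $x=0$ sub-case by observing that $G'$, not being a nimber, cannot itself equal $*(k-1)$.
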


\begin{proof}
Suppose that $*(k-1)$ is not remote for $G'$. By Theorem~\ref{th:translationrev}, $x+*(k-1)$ is a proper follower of $G$, and, by Corollary \ref{th:birthnimber}, $b(x+*(k-1))=b(x)+k-1$. However, this equality contradicts the inequality $b(G)<k$.
\end{proof}

When analyzing disjunctive sums of hot games, it is natural to anchor the first level of analysis in stops, a concept that recurs through all followers of games. In the case of disjunctive sums of infinitesimals, our paper introduces the concept of \emph{distance to a nimber or better}, which is developed in Section~\ref{sec:dist}. Distance recurs also through all followers of games. As we will see, this concept works very well when used with Norton Remoteness Theorem. Together, they provide an original type of proof. In a way, this demonstrative construction is another contribution of this document.

\section{Indecomposable numbers}
\label{sec:numbers}
 We can observe that dyadics in Figure \ref{fig:fig2} that are not between $-1$ and $1$ admit natural decompositions. For example,  $\frac{7}{4}=1\frac{3}{4}=1+\frac{3}{4}$. Since $b(\frac{7}{4})=4$, $b(1)=1$, and $b(\frac{3}{4})=3$, the last sum is a strong decomposition. To transform this observation into a proof, we have to analyze disjunctive sums of the type $G+H+x$, where $x$ is a dyadic. It is important to mention that we have to consider \emph{all possible game forms} $G$ and $H$. These may be numbers, tepid forms or hot forms. Furthermore, it is important to argue that moves on $x$ are ``bad moves'' and do not need to be considered. For that, we use Theorem~\ref{th:avoidance}.

\begin{theorem}\label{th:numbers}A number $x$ is indecomposable if and only if $|x|\leqslant 1$.
\end{theorem}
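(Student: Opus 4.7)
For the forward direction, writing $x = \operatorname{sgn}(x) + (x - \operatorname{sgn}(x))$ when $|x|>1$ is a decomposition; Theorem~\ref{th:brnumbers} immediately shows both summands have strictly smaller birthday than $x$.

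For the converse my plan is to argue by contradiction. Suppose $|x|\leqslant 1$ and $x = G+H$ with $b(G), b(H) < b(x)$. The trivial cases $|x|\in\{0,1\}$ force $G = H = 0$, so $G+H=0\neq x$; hence write $|x| = r/2^n$ in lowest terms with $n\geqslant 1$, giving $b(x) = n+1$, and by negation symmetry take $x>0$. Since a number plus a non-number is never a number, either $G$ and $H$ are both numbers, or neither is. In the ``both numbers'' case, write $G = a/2^p$ and $H = b/2^q$ in reduced form (WLOG $p\leqslant q$); a short denominator analysis splits into $p=q=0$ (contradicted immediately since $x$ is non-integer), $p<q$ (where the sum already has reduced denominator $2^q$, forcing $q = n$), and $p=q\geqslant 1$ (where the sum's numerator is even and reduces, forcing $p\geqslant n+1$). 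In every subcase Theorem~\ref{th:brnumbers} yields $\max(b(G), b(H)) \geqslant n+1 = b(x)$, contradicting the birthday hypothesis.

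The heart of the argument is the ``both non-numbers'' case, which I would handle via the following claim, proved by induction on $b(G)$: for every non-number game form $G$ with $b(G)\leqslant n$, we have $b(x-G) \geqslant n+2$. Granted this, $b(H) = b(x-G) \geqslant n+2 > n+1 = b(x)$, the required contradiction. The base $b(G)=1$ forces $G=\cgstar$, and Corollary~\ref{th:birthnimber} gives $b(x+\cgstar) = b(x) + 1 = n+2$. For the inductive step, Theorem~\ref{th:translationrev}(1) realises the canonical form of $x-G$ as $\{x-G^{\mathcal{R}}\mid x-G^{\mathcal{L}}\}$, so $b(x-G) = 1+\max_o b(x-G^o)$ where $o$ ranges over the canonical options of $G$, each of birthday at most $n-1$. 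For a number option $G^o = a/2^p$ in reduced form, the bound $b(G^o)\leqslant n-1$ forces $p=0$ or $p\leqslant n-2$; in either case the numerator of $x-G^o$ viewed over $2^n$ stays odd, so Theorem~\ref{th:brnumbers} gives $b(x-G^o)\geqslant n+1$. For a non-number option, the inductive hypothesis applies and gives $b(x-G^o)\geqslant n+2$. Either way $b(x-G^o)\geqslant n+1$, so $b(x-G)\geqslant n+2$, completing the induction.

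I expect the main bookkeeping to be in the number-option case of the induction, where the bound $b(G^o)\leqslant n-1$ (inherited from $b(G)\leqslant n$) is used essentially to keep the numerator of $x-G^o$ odd and the reduced denominator equal to $2^n$. This bound -- and hence the whole argument -- hinges on the hypothesis $|x|\leqslant 1$ forcing $b(x)$ to be exactly $n+1$ rather than larger; for $|x|>1$ one has $b(x) = m+1+n$ with $m\geqslant 1$, and the analogue of the claim fails, consistent with the easy direction.
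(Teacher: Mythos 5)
Your argument is correct, and for the crucial ``both summands are non-numbers'' case it takes a genuinely different route from the paper. The paper's proof is strategic: it assumes the decomposition minimizes $b(G)+b(H)$, considers Right's move to $G^{R_1}+H-x$, invokes the Number Avoidance Theorem to confine Left's replies to $G^{R_1}$ or $H$, and shows that either reply would make $G^{R_1}$ reversible or dominated, contradicting canonicity. You instead prove a purely arithmetic statement by induction on the birthday: if $G$ is a non-number with $b(G)\leqslant n$, then the canonical form of $x-G$ has birthday at least $n+2$, using Theorem~\ref{th:translationrev} to unfold that canonical form level by level, with a parity-of-numerator argument for the number options and the inductive hypothesis for the rest. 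This dispenses with the Number Avoidance Theorem and the minimality device entirely, and even yields the quantitative conclusion $b(H)\geqslant b(x)+1$ rather than a bare contradiction; on the other hand, the paper's strategic template is the one that is recycled for Theorem~\ref{th:strongnumbers} and adapted to nimbers in Section~\ref{sec:nimbers}, whereas your induction leans on the special behaviour of translation by a number. Two small points to tighten: state the inductive claim for canonical forms $G$ (first replacing each summand by its canonical form, which cannot increase its birthday --- a reduction the paper also makes silently), since Theorem~\ref{th:translationrev} requires $G$ to be canonical; and the assertion $b(H)=b(x-G)$ should read that $b(H)$ is at least the birthday of the canonical form of the value $x-G$, since $H$ is an arbitrary form of that value. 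Your forward direction via $x=\operatorname{sgn}(x)+(x-\operatorname{sgn}(x))$ and your denominator analysis in the two-numbers case match the paper's in substance.
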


\begin{proof} Let $|x|=i+\frac{r}{2^n}$, where $i$ is a nonnegative integer and the integer $0\leqslant r<2^n$, if positive, is odd. Observe that $x=i+\frac{r}{2^n}$ or $x=-i-\frac{r}{2^n}$, depending on the sign of $x$. In this proof, we will assume that $x>0$ since the arguments for $x<0$ are similar.\\

\noindent
($\Rightarrow$) Suppose $x>1$. If $x$  is an integer then $x=1+(x-1)$  is a decomposition of $x$. If $x$ is not an integer, then $x=i+\frac{r}{2^n}$  is a decomposition of $x$ since
$b(i)=i$, $b(\frac{r}{2^n})=n+1$, and $b(x)=i+1+n$ (Theorem \ref{th:brnumbers}).  \\

\noindent
($\Leftarrow$) Suppose $x=\frac{r}{2^n}$, and $0\leqslant r\leqslant 2^n$. If $x=0$ then $x$ is trivially indecomposable since there are no summands with birthdays less than zero. If $x=1$  then $x$ is indecomposable since the only available summand is zero. Hence, consider the case, $0<x<1$. By Theorem \ref{th:brnumbers}, we know that $b(x)=n+1$. Suppose that $x=G+H$ is a decomposition of $x$,  that $G$ and $H$ are canonical forms, and that $b(G)+b(H)$ is minimum.\\

Suppose that $G$ is not a number. In that case, since $G=x-H$, then $H$ cannot be a number either. Also, since $G$ and $H$ are not numbers, both players have options in $G$ and $H$. We are assuming that $G+H-x=0$, therefore Left has a winning response to the Right move $G^{R_1}+H-x$.  By Theorem \ref{th:avoidance}, there must be a winning move in $G^{R_1}$ or in $H$.
\begin{enumerate}
  \item[] If $G^{R_1L}+H-x>0$ (the inequality is strict because $b(G)+b(H)$ is minimum), then
 $G^{R_1L}+H-x>0 =G+H-x$, which yields, $G^{R_1L}>G$. This implies that $G^{R_1}$ is a reversible option contradicting that $G$ is in canonical form.

  \item[] If  $G^{R_1}+H^L-x>0$, then $G^{R_1}+H^L-x>0 =G+H-x$, and thus
  $G^{R_1}+H^L > G+H$. Now in $G+H^L-x$, Right has a winning move in either $G$ or $H^L$. By the argument in the previous paragraph, it must be to  $G^{R_2}+H^L-x<0$. Now we have
  $G^{R_1}+H^L-x>0>G^{R_2}+H^L-x$ which gives $G^{R_1}>G^{R_2}$. Thus, $G^{R_1}$ is a dominated option again contradicting that $G$ is in canonical form.
  \end{enumerate}

The only possible case is that both $G$ and $H$ are numbers. Since $G+H=x$, at least one of the summands has a denominator $2^j$ with $j\geqslant n$. Without loss of generality, say that is $G$. Since $G$ cannot be zero, by Theorem \ref{th:brnumbers}, $b(G)\geqslant j+1\geqslant n+1=b(x)$. That contradicts the fact that $b(G)$ is smaller than $b(x)$.\\
\end{proof}

\begin{theorem}\label{th:strongnumbers}Let $x$ be a number. If $|x|>1$ then $x$ is strongly decomposable and the only strong decompositions of $x$ are sums of numbers.
\end{theorem}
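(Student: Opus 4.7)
The plan is to handle existence and the characterization in turn. Assume throughout that $x>1$, the case $x<-1$ being symmetric.

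For existence, I split on whether $x$ is an integer. If $x$ is an integer with $x>1$, then $x=1+(x-1)$ is a strong decomposition, since Theorem~\ref{th:brnumbers} gives $b(1)+b(x-1)=1+(x-1)=x=b(x)$. Otherwise write $x=m+r/2^{n}$ with $m\geqslant 1$, $0<r<2^{n}$, and $r$ odd; then $x=m+r/2^{n}$ is a strong decomposition, because $b(m)+b(r/2^{n})=m+(1+n)=b(x)$ by the same theorem.

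For the characterization, the key observation is that strong decompositions are exactly the decompositions of minimum total birthday: every decomposition $x=H+J$ satisfies $b(x)=b(H+J)\leqslant b(H)+b(J)$ by subadditivity of the birthday with respect to disjunctive sum, with equality precisely when the decomposition is strong. This lets me replay the proof of Theorem~\ref{th:numbers}. Let $x=G+H$ be any strong decomposition with $G$ and $H$ in canonical form and nonzero, and suppose for contradiction that $G$ is not a number. Then $H$ is not a number either (otherwise $G=x-H$ would be a number), so both $G$ and $H$ admit Left and Right options. From the $\mathcal{P}$-position $G+H-x=0$ consider Right's move to $G^{R_1}+H-x$; by the Number Avoidance Theorem, Left's winning response lies in $G^{R_1}$ or in $H$. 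If Left responds in $G^{R_1}$, then $G^{R_1L}+H\geqslant x$, so $G^{R_1L}\geqslant G$ and $G^{R_1}$ is reversible, contradicting canonical form. If Left responds in $H$, then $G^{R_1}+H^L\geqslant x$; examining Right's winning follow-up to Left's alternative move $G+H^L-x\leqslant 0$ yields either a dominated Right option (via $G^{R_1}\geqslant G^{R_2}$) or a reversible Left option $H^L$ (via $H^{LR}\leqslant H$), again contradicting canonical form.

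The one case not settled by reversibility or domination is the equality $G^{R_1}+H^L=x$, which can occur when $G^{R_1}=G^{R_2}$ as options in the analysis above; this is the main obstacle. In Theorem~\ref{th:numbers} such a case was eliminated by the minimality hypothesis; here subadditivity does the same job, since $G^{R_1}+H^L=x$ would force $b(x)\leqslant b(G^{R_1})+b(H^L)\leqslant(b(G)-1)+(b(H)-1)=b(x)-2$, a contradiction. Hence $G$ must be a number, and by symmetry so must $H$.
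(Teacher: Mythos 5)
Your proof is correct and follows essentially the same route as the paper: existence comes from the explicit decompositions $x=1+(x-1)$ and $x=m+\frac{r}{2^n}$, and the characterization comes from observing that a strong decomposition is minimal in $b(G)+b(H)$ (via subadditivity of the birthday), so the reversibility/domination argument of Theorem~\ref{th:numbers} applies, with minimality ruling out the residual equality case $G^{R_1}+H^L=x$. You simply spell out in full what the paper compresses into a citation of that earlier proof.
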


\begin{proof}
All strong decompositions $x=G+H$ are minimal in terms of\linebreak $b(G)+b(H)$. Therefore, all contradictions found in the proof of Theorem \ref{th:numbers} can be used, that is, $G$ and $H$ must be numbers. The only difference is that the last inequality cannot be deduced anymore. Indeed, there are strong decompositions of $x$, as the decompositions mentioned in the first implication of the same proof.
\end{proof}

\section{Distance to a nimber or better}
\label{sec:dist}
The proof of Theorem \ref{th:numbers} was based on assuming that $G+H-x=0$, then arguing that against a Right move in $G+H-x$ to $G^{R_1}+H-x$, Left did not have a winning move, which is a contradiction. In particular, a Left reply $G^{R_1L}+H-x$ was ruled out by reversibility, $G^{R_1}+H^L-x$ by domination, and $G^{R_1}+H-x^R$ by the strong version of Number Avoidance Theorem.

To find the indecomposable nimbers, we will need to analyze disjunctive sums of the form $G+H+*(2^n)$ and consider all of Left's responses to a Right move. Unlike the proof of Theorem \ref{th:numbers}, we cannot use the Number Avoidance Theorem to eliminate a Left response in $*(2^n)$. This fact means that the proof has to be more subtle.

 When $G$ and $H$ are hot forms, it will be possible make use of Left and Right stops to deal with a hypothetical Left answer to $G^{R_1}+H+*(2^n-k)$. The hardest problem occurs when both $G$ and $H$ are infinitesimal. One idea would be to use the Atomic Weight Theory. For example, suppose that $aw(G^{R_1}+H)=-1$. If Left plays on the nimber to $G^{R_1}+H+*(2^n-k)$ then, by the `two-ahead' rule, Right wins. That happens due to the fact that $aw(G^{R_1}+H+*(2^n-k))$ continues to be equal to $-1$ and it is Right's turn.
 In that example, Right's move to $G^{R_1}+H+*(2^n)$ creates a kind of ``race'' in the first two components in which Left cannot afford to ignore. Unfortunately, atomic weight is not defined for all infinitesimals, it is defined only for atomic games \citep{Sie013}. For example, the game $\{\{1\,|\,\cgdoubleup\}\,|\,0\}$ is infinitesimal and does not have an atomic weight. Left's move to $\{1\,|\,\cgdoubleup\}$ behaves like an ``infinitely large threat''; Right is compelled to respond before he may consider moving in a dicotic component.

Observe that, when $G$ is an infinitesimal, $\LS{G}=\RS{G}=0$. That implies that if Right plays first, even if Left has permission to pass, Right can force the situation where one of the players moves to a follower $G'\leqslant *m$, for some $m$. We will introduce a concept that focuses on the number of moves Right needs to achieve a nimber or better. This new concept is defined for \emph{all} infinitesimals and is applicable to \emph{certain}, but not all, disjunctive sums. This class of sums is rich enough for us to prove the indecomposability result.\\

\noindent
\textbf{Notation}: Let $G$ be an infinitesimal. If $G\cggfuz \cgstar k$ (or $G> \cgstar k$) for every nonnegative integer $k$, we write $G\cggfuz\cgallstar$ (or $G>\cgallstar$).
If $G\cglfuz *k$ (or $G< *k$) for every nonnegative integer $k$, we write $G\cglfuz\cgallstar$ (or $G<\cgallstar$).\\

\begin{definition} \label{def:dist}
Let $G$ be a game such that $\RSI{G}\leqslant 0$. The \emph{Right distance to a nimber or better} in $G$, $\RDI{G}$, is defined recursively:

\begin{enumerate}
  \item If there is $k$ such that $G\leqslant *k$, then $\RDI{G}=0$;
  \item Otherwise, \begin{align*}
&\RDI{G}=1+\min_{\substack{G^R\in G^\mathcal{R}: \\ \LSI{G^R}\leqslant 0}}\left\{\max\left\{\RDI{G^R},-1+\max_{G^{RL}\in G^{R\mathcal{L}}}\left\{\RDI{G^{RL}}\right\}\right\}\right\}.
\end{align*}
\end{enumerate}
\end{definition}

We will provide some intuition about the underlying idea of this concept. Before that, given that the second item of the definition is especially intricate, it is important to prove that this definition can indeed be made.

\begin{theorem}\label{th:welldefined}
If $G$ is a game form such that $\RSI{G}\leqslant 0$, then $\RDI{G}$ is well-defined.
\end{theorem}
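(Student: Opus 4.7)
The plan is to proceed by strong induction on the birthday $b(G)$, verifying that the right-hand side of Definition~\ref{def:dist} returns a nonnegative integer whenever $\RSI{G}\leqslant 0$. The base case $b(G)=0$ forces $G=0\leqslant *0$, so clause~(1) applies with $k=0$ and $\RDI{G}=0$.

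For the inductive step, assume $\RDI{H}$ is well-defined for every $H$ with $b(H)<b(G)$ and $\RSI{H}\leqslant 0$. If clause~(1) already applies to $G$, we are done, so suppose it does not. Three checks are needed for clause~(2) to produce a finite value: that the minimization set $\{G^R\in G^\mathcal{R}:\LSI{G^R}\leqslant 0\}$ is non-empty, that $\RDI{G^R}$ is defined on this set, and that $\RDI{G^{RL}}$ is defined for every Left option $G^{RL}$ appearing in the inner maximum. First, $G$ itself cannot be a number: if it were, $\RSI{G}\leqslant 0$ would force $G\leqslant 0=*0$, placing $G$ in clause~(1). Hence $\RSI{G}=\min_{G^R}\LSI{G^R}\leqslant 0$ witnesses a qualifying $G^R$, and for any such $G^R$, Theorem~\ref{th:stops}(2) gives $\RSI{G^R}\leqslant\LSI{G^R}\leqslant 0$; since $b(G^R)<b(G)$, the inductive hypothesis supplies $\RDI{G^R}$.

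The main obstacle is the condition on the Left options $G^{RL}$. When $G^R$ is not a number, the defining identity $\LSI{G^R}=\max_{G^{RL}}\RSI{G^{RL}}\leqslant 0$ forces $\RSI{G^{RL}}\leqslant 0$ for every Left option, and induction applies. When $G^R$ is a number (necessarily $\leqslant 0$), I would pass to its canonical form, whose Left options, if any, are numbers strictly smaller than $G^R$ and therefore strictly negative, so $\RSI{G^{RL}}<0$ and the inductive hypothesis again applies; the empty-option case is absorbed by the convention that an empty maximum equals $-\infty$, rendering the term $-1+\max_{G^{RL}}\RDI{G^{RL}}$ inactive in the outer maximum. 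Since the resulting minimum is taken over a non-empty finite set of integers, $\RDI{G}$ is a well-defined nonnegative integer, completing the induction.
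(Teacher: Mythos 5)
Your overall strategy---induction on the birthday, checking that the minimization set in clause (2) of Definition~\ref{def:dist} is non-empty and that every $\RD{G^R}$ and $\RD{G^{RL}}$ invoked there is itself defined---is essentially the paper's argument. Your base case, your observation that $G$ cannot be a number (so the set $\{G^R:\LS{G^R}\leqslant 0\}$ is non-empty), and your use of $\RS{G^R}\leqslant\LS{G^R}\leqslant 0$ for the qualifying Right options all match the published proof.

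The one step that does not work as written is your handling of the Left options $G^{RL}$ when $G^R$ is equal to a number. Definition~\ref{def:dist} operates on game \emph{forms}: the inner maximum ranges over $G^{RL}\in G^{R\mathcal{L}}$, the Left options of the form $G^R$ actually occurring in $G$, not over the options of its canonical form. Passing to the canonical form changes the set being maximized over (and potentially the value of $\RD{G}$), so it cannot certify that the quantity prescribed by the definition is well-posed; you still owe an argument that $\RS{G^{RL}}\leqslant 0$ for the original options $G^{RL}$, which need not be numbers at all (e.g.\ $G^R=\{\cgstar\,|\,\cgstar\}=0$ is a number whose only Left option is $\cgstar$). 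The repair is short and stays at the level of forms: since $G^R$ equals a number and $\LS{G^R}\leqslant 0$, we have $G^R\leqslant 0$; if some Left option had $\RS{G^{RL}}>0$, then $G^{RL}>0$ by Theorem~\ref{th:stops}(3), and Left, moving first in $G^R$, could move to $G^{RL}$ and win, contradicting $G^R\leqslant 0$. Hence $\RS{G^{RL}}\leqslant 0$ for every actual Left option, the birthday has dropped, and the inductive hypothesis applies. (The paper itself elides this case by writing $\RS{G^{RL}}\leqslant\LS{G^R}$ as if $G^R$ were never a number, so you were right to isolate it---only the proposed fix is off.)
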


\begin{proof}
Since $\RS{G}\leqslant 0$, either $G$ is zero, or $G$ has at least one Right option. If $G=0$, then, since $0$ is a nimber, the definition gives $\RD{G}=0$.

Suppose that $G$ has at least one Right option. If there is $k$ such that $G\leqslant *k$, then the definition gives again $\RD{G}=0$. Otherwise, by the definition of stops, there is some $G^R$ with $\LS{G^R}\leqslant 0$. From $\LS{G^R}\leqslant 0$, we can conclude two relevant inequalities: (1) $\RS{G^R}\leqslant\LS{G^R}\leqslant 0$ and (2) for all $G^{RL}\in G^{R\mathcal{L}}$, $\RS{G^{RL}}\leqslant \LS{G^R}\leqslant 0$. Now, by induction, both $\RD{G^R}$ and $\max\limits_{\substack{G^{RL}\in G^{R\mathcal{L}}}} \{\RD{G^{RL}}\}$ are well-defined. Thus, since the elements of the non-empty set to be minimized are well-defined, $\RD{G}$ is also\linebreak well-defined.
\end{proof}

\begin{observation}
If $\LSI{G}\geqslant 0$, then the definition of \emph{Left distance to a nimber or better} in  $G$, $\LDI{G}$, is defined in a similar way. Note that both $\RDI{G}$ and $\LDI{G}$ are nonnegative integers. Clearly, $\RDI{G}$ and $\LDI{G}$ are well-defined for all infinitesimals as, in such cases, $\LSI{G}$ and $\RSI{G}$ both equal zero. In the following proofs, whenever we write ``optimal option'', we are referring to an option that minimizes the set pointed out in the second item of Definition \ref{def:dist}.
\end{observation}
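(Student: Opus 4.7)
The plan is a straightforward structural induction on $G$ (equivalently, induction on the birthday $b(G)$). The inductive hypothesis is that $\RD{H}$ is well-defined for every proper follower $H$ of $G$ satisfying $\RS{H}\leqslant 0$. The base case handles any $G$ for which item~1 of Definition~\ref{def:dist} applies, i.e.\ $G\leqslant *k$ for some $k$: the value $\RD{G}=0$ is unambiguous. In particular $G=0$ is covered, since $0\leqslant *0$.

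For the inductive step, suppose $G$ satisfies $\RS{G}\leqslant 0$ but item~1 fails, so I must verify that the expression in item~2 makes sense. Three things need to be checked: (i) the outer minimum is taken over a non-empty set of Right options; (ii) each $\RD{G^R}$ invoked in the minimum is well-defined by induction; and (iii) each $\RD{G^{RL}}$ appearing in the inner maximum is well-defined by induction.

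For (i), because $G\not\leqslant *k$ for any $k$, $G$ cannot be a non-positive number (otherwise $G\leqslant 0=*0$), so $G$ has at least one Right option; the standard recursive formula for the Right stop gives $\RS{G}=\min_{G^R}\LS{G^R}\leqslant 0$, producing a witness $G^R$ with $\LS{G^R}\leqslant 0$. For (ii), Theorem~\ref{th:stops}(2) yields $\RS{G^R}\leqslant \LS{G^R}\leqslant 0$; since $b(G^R)<b(G)$, the inductive hypothesis gives $\RD{G^R}$ well-defined. For (iii), whenever $G^R$ has Left options, the recursive formula for the Left stop is $\LS{G^R}=\max_{G^{RL}}\RS{G^{RL}}$, so every such $G^{RL}$ satisfies $\RS{G^{RL}}\leqslant \LS{G^R}\leqslant 0$, and induction applies again.

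The only mild obstacle is the corner case where a chosen $G^R$ has no Left options at all, so that the inner maximum ranges over the empty set. I would resolve this with the convention $\max\emptyset=-\infty$ (equivalently, by understanding that the term $-1+\max_{G^{RL}}\{\RD{G^{RL}}\}$ simply drops out of the outer maximum), so that the outer maximum reduces to $\RD{G^R}$, itself well-defined by (ii). With this convention in place, every quantity inside the outer minimum is a well-defined non-negative integer, and $\RD{G}$ is therefore a well-defined non-negative integer, completing the induction.
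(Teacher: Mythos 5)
Your proposal is correct and follows essentially the same route as the paper's own proof of Theorem~\ref{th:welldefined}: induction on the game tree, with the base case given by item~1 of Definition~\ref{def:dist}, non-emptiness of the minimized set supplied by the recursive characterization of $\RSI{G}$, and the two inequalities $\RSI{G^R}\leqslant\LSI{G^R}\leqslant 0$ and $\RSI{G^{RL}}\leqslant\LSI{G^R}\leqslant 0$ licensing the inductive appeals. Your explicit convention for the case where $G^{R\mathcal{L}}$ is empty is a small point of extra care that the paper glosses over, but it does not change the argument.
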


The following results and observations are only stated for Right distances, but the corresponding statements for Left distances also hold.\\

The Right distance is a \textit{worst-case} scenario. It is the largest number of unanswered moves that Left can force Right to make in order to get a follower that is less or equal to a nimber. Note, Left could be playing in some other component of a disjunctive sum that includes $G$.

In this definition, the base case of the recursion is when $G$ is already less or equal than a nimber. Then,
$\RD{G}=0$ and Right does not need to make moves to achieve a nimber or better. Note that since $\RS{G}=0$ and $k=0$ is allowed, then either $G$ or some follower of $G$ will satisfy this condition.

In part 2 of the definition, Right needs to make a move in $G$, and Right is trying to minimize the distance to his goal, hence the initial $+1$. After a Right move,
Left is trying to maximize the distance to Right's goal, and so has two possibilities. She can make no move, giving the $\RD{G^R}$ term. She can also move in order to delay Right's goal, playing the maximum, but she has also answered Right's move, which gives the $-1+\RD{G^{RL}}$ term. Left will choose the maximum of these two. Right will choose an optimal option, i.e., least in terms of distance, hence the initial minimization.

\begin{observation}\label{obs:remote}
A consequence of Definition \ref{def:dist} is the fact \[\RDI{G}\geqslant1 \implies G\cggfuz\cgallstar.\] Of course, if we had some $k$ such that $G\leqslant *k$, we would have $\RDI{G}=0$, and not $\RDI{G}\geqslant1$. Analogously, a consequence of Definition \ref{def:dist} is the fact \[\RDI{G}\geqslant2 \implies G>\cgallstar.\] If we had some $k$ such that $G\cgfuzzy *k$, we would have some $G^R$ such that $G^R\leqslant *k$ or some $j$ such that $G\leqslant *(k-j)$. In the first case, we would have $\RDI{G}\leqslant 1$; in the second case, we would have $\RDI{G}=0$.
\end{observation}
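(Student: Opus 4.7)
I plan to prove both implications by contrapositive.

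For the first implication, the contrapositive asserts that if $G\leqslant *k$ for some nonnegative integer $k$, then $\RD{G}=0$, which is exactly the base case (item~1) of Definition~\ref{def:dist}; the first half of the observation is therefore an immediate unpacking of the base case of the recursion.

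For the second implication, I need to show that $G\leqslant *k$ or $G\cgfuzzy *k$ (for some $k$) implies $\RD{G}\leqslant 1$. If $G\leqslant *k$, then $\RD{G}=0$ as above. Assume then $G\cgfuzzy *k$. Right then has a winning first move in $G+*k$; that move is either in $*k$, producing $G+*(k-j)\leqslant 0$ for some $j\geqslant 1$, whence $G\leqslant *(k-j)$ and $\RD{G}=0$; or it is in $G$, producing some $G^R$ with $G^R\leqslant *k$.

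In the remaining sub-case I would use this $G^R$ as the witness in the minimum of item~2 of Definition~\ref{def:dist}. From $G^R\leqslant *k$ we have $\LS{G^R}\leqslant \LS{*k}=0$ (making $G^R$ eligible) and $\RD{G^R}=0$. What is left is to bound the inner maximum, i.e., to show that every Left option $G^{RL}$ of $G^R$ satisfies $\RD{G^{RL}}\leqslant 1$. I expect this to be the main obstacle, because the hypothesis $G^R\leqslant *k$ does not directly constrain the distances of its Left options. My plan is to dispatch it with an auxiliary induction on birthday, proving the lemma: whenever a form $H$ satisfies $H\leqslant *k$ for some $k$, every Left option $H^L$ of $H$ has $\RD{H^L}\leqslant 1$. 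The inductive step repeats the dichotomy just used: since $H\leqslant *k$, Right must win after the Left move to $H^L+*k$; either Right plays in $*k$, yielding $H^L\leqslant *(k-j)$ (hence $\RD{H^L}=0$), or Right plays in $H^L$, yielding $H^{LR}\leqslant *k$, and then the inductive hypothesis applied to $H^{LR}$ (of strictly smaller birthday than $H$) gives $\RD{H^{LRL'}}\leqslant 1$ for every Left option of $H^{LR}$; unfolding Definition~\ref{def:dist} with $H^{LR}$ as witness then produces $\RD{H^L}\leqslant 1$. Taking $H=G^R$ in this lemma delivers $\RD{G^{RL}}\leqslant 1$ for every $G^{RL}$, and the inequality $\RD{G}\leqslant 1$ follows, completing the second implication.
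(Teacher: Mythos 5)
Your proof is correct and follows the same skeleton as the paper's inline justification: both parts are handled by contrapositive, the first being exactly item~1 of Definition~\ref{def:dist}, and the second reducing to the dichotomy on Right's winning first move in $G+*k$ (either to $G+*(k-j)\leqslant 0$, giving $\RD{G}=0$, or to some $G^R\leqslant *k$). Where you go beyond the paper is in the first branch of that dichotomy: the paper simply asserts that the existence of a $G^R\leqslant *k$ yields $\RD{G}\leqslant 1$, whereas Definition~\ref{def:dist} also requires controlling the inner maximum $-1+\max_{G^{RL}}\left\{\RD{G^{RL}}\right\}$, i.e., showing $\RD{G^{RL}}\leqslant 1$ for every Left option of the witness $G^R$. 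You correctly identify this as the real content of the step and close it with an auxiliary lemma, proved by induction on birthday, that any $H\leqslant *k$ has all of its Left options at Right distance at most $1$; the inductive step is sound, since $H+*k\leqslant 0$ forces, for each $H^L$, either $H^L\leqslant *(k-j)$ or some $H^{LR}\leqslant *k$, and the induction hypothesis applied to $H^{LR}$ bounds the inner maximum at the next level (eligibility of the witnesses, $\LS{H^{LR}}\leqslant 0$, follows from $H^{LR}\leqslant *k$ by monotonicity of stops, as in Theorem~\ref{th:welldefined}). So your argument is not a different route but a completed version of the paper's: the observation treats the bound $\RD{G}\leqslant 1$ as immediate from the definition, while your lemma supplies the justification that the recursive definition actually demands.
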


\noindent
\textbf{Examples:}
\begin{enumerate}
  \item  $\RD{\cgupstar}=1$;
  \item  $\RD{\cgdoubleup}=2$;
  \item  $\RD{\{0\,|\,\{0\,|\,-1\}\}}=1$;
  \item  $\RD{\{\{1\,|\,\cgdoubleup\}\,|\,0\}}=1$ (this game is not atomic);
  \item  $\RD{\{5.\cgup\,|\,\{3.\cgup\,|\,0\}\}}=3$ (in this game $\RD{G^R}=1$, but Left delays Right's goal with her answer).
\end{enumerate}
The next lemmas are almost immediate from the definition but will be used later.

\begin{lemma}\label{lem:DistanceSteps} Let $G$ be an infinitesimal and $\RDI{G}>0$. For an optimal $G^R$ in Definition \ref{def:dist},  $\RDI{G}>\RDI{G^R}$. If $\RDI{G}-1>\RDI{G^R}$, then there exists a $G^{RL}$ with $\RDI{G^{RL}}=\RDI{G}$, and
$\RDI{G^{RL'}}\leqslant\RDI{G}$ for any other option $G^{RL'}$.
\end{lemma}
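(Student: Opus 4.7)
The plan is to unfold Definition~\ref{def:dist} at an optimal $G^R$ and split into two cases depending on which argument realises the outer maximum. Since $\RD{G}>0$, the base case does not apply, so
\[
\RD{G} \;=\; 1 + \max\!\left\{\RD{G^R},\; -1 + \max_{G^{RL}\in G^{R\mathcal{L}}}\{\RD{G^{RL}}\}\right\},
\]
under the natural convention that a maximum over an empty Left-option set is $-\infty$, in which situation the outer max collapses to $\RD{G^R}$. Each recursive call is legitimate by Theorem~\ref{th:welldefined}, because $\LS{G^R}\leqslant 0$ forces $\RS{G^{RL}}\leqslant 0$ for every $G^{RL}\in G^{R\mathcal{L}}$.

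Next I would label Case~A the situation in which $\RD{G^R}$ realises the outer maximum, giving $\RD{G}=1+\RD{G^R}$, and Case~B the situation in which the other term realises it, giving $\RD{G}=\max_{G^{RL}}\{\RD{G^{RL}}\}$ together with $\RD{G^R}\leqslant \RD{G}-2$. The first assertion follows immediately: in Case~A one has $\RD{G}=\RD{G^R}+1>\RD{G^R}$, and in Case~B one has $\RD{G}\geqslant \RD{G^R}+2>\RD{G^R}$.

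For the second assertion I would argue by elimination. The hypothesis $\RD{G}-1>\RD{G^R}$ is incompatible with Case~A, which only yields $\RD{G}=\RD{G^R}+1$, so we must be in Case~B. That case supplies the required witness at once: $\RD{G}$ equals $\max_{G^{RL}}\{\RD{G^{RL}}\}$ taken over a necessarily nonempty set, so some $G^{RL}$ attains $\RD{G^{RL}}=\RD{G}$, and by definition of maximum every other option $G^{RL'}$ satisfies $\RD{G^{RL'}}\leqslant \RD{G}$.

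The only subtlety I foresee is the empty-Left-option corner case for $G^R$, where $\max_{G^{RL}}$ would be taken over $\emptyset$. The convention stated above places that situation inside Case~A, which is harmless for the first assertion and is explicitly ruled out by the hypothesis of the second, so no separate argument is required.
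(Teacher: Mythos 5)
Your proposal is correct and follows essentially the same route as the paper: unfold Definition~\ref{def:dist} at the optimal $G^R$, split on which term realises the outer maximum, observe that the ``Left passes'' case gives exactly $\RD{G}=1+\RD{G^R}$ (so it is excluded by the hypothesis $\RD{G}-1>\RD{G^R}$), and read off the maximising $G^{RL}$ as the witness. The only cosmetic difference is that you make the empty-Left-option convention explicit, which the paper handles implicitly via a vacuous quantifier.
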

\begin{proof}
Let $G^R$ be a Right optimal option as defined in Definition \ref{def:dist}.

First, consider the case where the best for Left is to ignore the alternating condition and not make any moves. By definition, this occurs when, for every $G^{RL}$, we have that $\RD{G^{RL}}-1<\RD{G^R}$. In this case, we have $\RD{G}=1+\RD{G^R}$, and the lemma is thus verified.

Now, suppose that Left can choose an optimal $G^{RL}$ as defined in Definition \ref{def:dist}. In this case, we have $\RD{G}=1+\RD{G^{RL}}-1=\RD{G^{RL}}$. Due to the maximality, we have $\RD{G^{RL}}-1\geqslant \RD{G^R}$. Combining these two facts, we get $\RD{G}=\RD{G^{RL}}\geqslant \RD{G^R}+1> \RD{G^R}$. It is only when Left makes a move that we can have $\RD{G}>\RD{G^R}+1$. However, there cannot be another option $G^{RL'}$ such that $\RD{G^{RL'}}>\RD{G}$. If such an option existed, since $\RD{G}=\RD{G^{RL}}$, then we would have $\RD{G^{RL'}}-1>\RD{G^{RL}}-1$, and $G^{RL}$ would not be optimal. Once again, the lemma is verified.
\end{proof}

The next definition gives a class of disjunctive sums required to characterize the indecomposable nimbers.

\begin{definition} \label{def:ssyst}
A \emph{star system} is a disjunctive sum $G+H+*k$ where\linebreak $\RSI{G}\leqslant0$ and $\LSI{H}\geqslant0$.
\end{definition}

Star systems have a rule similar to one of the cases in the two-ahead rule from Atomic Weight Theory.

\begin{lemma}\label{lem:DistanceZero} Let $G+H+*k$ be a star system. If $\RDI{G}=0$ and $\LDI{H}=1$ then Right wins playing first. If $\RDI{G}=0$ and $\LDI{H}\geqslant 2$ then Right wins playing first or second.
\end{lemma}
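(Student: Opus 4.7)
I plan to prove both statements together by strong induction on $b(G) + b(H) + k$. The central reduction is that $\RD{G} = 0$ yields a nimber $*m$ with $G \leq *m$, whence
\[
G + H + *k \;\leq\; *m + H + *k \;=\; H + *(m \oplus k),
\]
and since $X \leq Y$ together with $Y \not\geq 0$ (resp.\ $Y < 0$) forces $X \not\geq 0$ (resp.\ $X < 0$), any negative information on the right-hand side transfers to the full sum.

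The first assertion needs no induction. By the Left-distance analogue of Observation \ref{obs:remote}, $\LD{H} \geq 1$ forces $H \not\geq *j$ for every $j$; in particular $H \not\geq *(m \oplus k)$, so $H + *(m \oplus k) \not\geq 0$, and the displayed inequality yields $G + H + *k \not\geq 0$. Hence Right has a winning first move.

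For the second assertion, Part 1 with $\LD{H} \geq 1$ already covers Right moving first, so I only need to show every Left move in $G + H + *k$ admits a winning Right reply. If Left plays $*k \to *j$, apply IH Part 1 to $(G,H,j)$. If Left plays $G \to G^L$, I exploit that $G + *m \leq 0$ forces a winning Right reply to $G^L + *m$: either $G^L \to G^{LR}$ with $G^{LR} \leq *m$ (so $\RD{G^{LR}} = 0$ and IH Part 2 handles $(G^{LR},H,k)$), or $*m \to *j$ with $G^L \leq *j$ (so $\RD{G^L} = 0$ and IH Part 1 handles $(G^L,H,k)$). If Left plays $H \to H^L$ with $\RS{H^L} \geq 0$, the condition $\LD{H} \geq 2$ in Definition \ref{def:dist} forces either $\LD{H^L} \geq 1$ (apply IH Part 1 to $(G, H^L, k)$) or some $H^{LR}$ with $\LD{H^{LR}} \geq 2$ (Right responds $H^L \to H^{LR}$ and IH Part 2 applies to $(G, H^{LR}, k)$).

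The delicate case is Left playing $H \to H^L$ with $\RS{H^L} < 0$, since $H^L$ then lies outside the star-system framework and no inductive hypothesis is directly available on it. I propose to finish this case by hand via stops. If $H^L$ is a number, then $H^L < 0$, and item 4 of Theorem \ref{th:stops} gives $\LS{H^L + *(m \oplus k)} = H^L < 0$; otherwise $\RS{H^L} < 0$ supplies a Right option $(H^L)^R$ with $\LS{(H^L)^R} < 0$, and after Right responds $H^L \to (H^L)^R$, item 5 of Theorem \ref{th:stops} bounds $\LS{(H^L)^R + *(m \oplus k)} \leq \LS{(H^L)^R} < 0$. Item 3 upgrades each bound to a strict game inequality $< 0$; combined with the reduction above, the resulting position is $< 0$ and Left has lost. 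The main bookkeeping obstacle is to verify at every inductive step that the star-system conditions $\RS \leq 0$ and $\LS \geq 0$ survive, which will follow from monotonicity of stops (giving $G' \leq *m' \implies \RS{G'} \leq 0$) together with the standard inequality $\LS{H^{LR}} \geq \RS{H^L}$ for non-number $H^L$.
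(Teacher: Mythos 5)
Your proof is correct and follows essentially the same strategy as the paper's: Part 1 via the reduction $G\leqslant *m$ combined with $\LD{H}\geqslant 1\implies H\cglfuz\cgallstar$, and Part 2 by induction over Left's possible replies, restoring the distance gap with an $H^{LR}$ satisfying $\LD{H^{LR}}\geqslant 2$ when necessary and disposing of the $\RS{H^L}<0$ case via stop inequalities. The only differences are cosmetic: where you recurse on $(G^{LR},H,k)$ and $(G^{L},H,k)$ after a Left move in $G$, the paper concludes directly that the resulting position is negative using $\LD{H}\geqslant 2\implies H<\cgallstar$.
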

\begin{proof}
Starting with the first implication, the definition of distance implies that there is a nonnegative $k'$ such that $G\leqslant *k'.$ Therefore, we have that $G+H+*k\leqslant H+*k'+*k$. On the other hand, since $\LD{H}= 1$, we have $H\cglfuz\cgallstar$. Hence, there is a winning move for Right in $H+*k'+*k$, and, consequently, there is a winning move for Right in $G+H+*k$.

Regarding the second implication, if Right plays first in $G+H+*k$, the argument is completely analogous to the one used to prove the first implication. Therefore, let us assume that Left plays first in $G+H+*k$. If Left moves to $G+H+*(k-j)$, then it is Right's turn, and we are in the case that was previously mentioned. If Left makes a move to $G+H^L+*k$ where $\RS{H^L}<0$, while $\LS{G}\leqslant0$ (due to $\RD{G}=0$) and $\LS{*k}=0$, then, by Theorem~\ref{th:stops}, we have $\RS{G+H^L+*k}\leqslant\LS{G}+\RS{H^L+*k}\leqslant\RS{H^L+*k}\leqslant \RS{H^L}+\LS{*k}<0$. This means that Right wins. If Left moves to $G+H^L+*k$ where $\RS{H^L}\geqslant0$, without reducing the distance with $H^L$ by more than one unit, then we either fall into the first case, or, by induction, Right wins. If Left moves to $G+H^L+*k$ where $\RS{H^L}\geqslant0$, and reduces the distance by more than one unit with $H^L$, then, according to Lemma~\ref{lem:DistanceSteps}, Right has an answer to $G+H^{LR}+*k$ where $\LD{H^{LR}}\geqslant 2$. Also in that case, by induction, Right wins. Finally, if Left moves to $G^L+H+*k$, two things can happen. Since $\RD{G}=0$, we know that $G\leqslant *k'$. This means that either there exists $G^{LR}+*k'\leqslant 0$, or there exists $G^{L}+*(k'-j)\leqslant 0$. In the first case, we have $G^{LR}+H+*k\leqslant H+*k'+*k<0$, and Right wins. In the second case, we have $G^L+H+*k\leqslant H+*(k'-j)+*k<0$, and Right also wins. The strict inequalities are due to the fact that $\LD{H}\geqslant 2$ (Observation \ref{obs:remote}).
\end{proof}

\begin{theorem}\label{th:twoahead}
Let $G+H+*k$ be a star system. If $\LDI{H}-\RDI{G}\geqslant 1$, then Right has a winning move in $G+H+*k$. Left has a winning move if  $\RDI{H}-\LDI{G}\geqslant 1$.
\end{theorem}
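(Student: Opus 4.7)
The plan is to prove the theorem by strong induction on $b(G)+b(H)+k$, focusing on the first conclusion since the second is its negation-dual. To make the induction run smoothly, I would prove simultaneously the strengthened pair: (i) if $\LD{H}-\RD{G}\geqslant 1$ then Right has a winning move in $G+H+*k$, and (ii) if $\LD{H}-\RD{G}\geqslant 2$ then Right also wins when Left moves first. This two-part structure mirrors Lemma \ref{lem:DistanceZero}, which supplies the base of the induction at $\RD{G}=0$.

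For the inductive step of (i) with $\RD{G}\geqslant 1$, the candidate winning move for Right is to an optimal option $G^R$ from Definition \ref{def:dist}, for which Lemma \ref{lem:DistanceSteps} provides $\LS{G^R}\leqslant 0$ and $\RD{G^R}\leqslant \RD{G}-1$. I would then dispatch each of Left's three possible responses. A Left move $G^R\to G^{RL}$ preserves the star-system structure (because $\RS{G^{RL}}\leqslant\LS{G^R}\leqslant 0$) with $\RD{G^{RL}}\leqslant \RD{G}$ by Lemma \ref{lem:DistanceSteps}, so the inductive hypothesis (i) finishes. A Left move in $*k$ yields a smaller star system with gap still $\geqslant 2$, and induction (i) applies. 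For a Left move $H\to H^L$ I would split on the stops of $H^L$: when $\LS{H^L}<0$, Theorem \ref{th:stops} gives $\RS{G^R+H^L+*k}<0$ and Right wins outright; when $\LS{H^L}\geqslant 0$ but $\RS{H^L}<0$, Right responds inside $H^L$ to an $H^{LR}$ realizing $\RS{H^L}$, after which $\LS{G^R+H^{LR}+*k}<0$ and the position is below $0$; finally, when $\LS{H^L}\geqslant 0$ and $\RS{H^L}\geqslant 0$, $H^L$ is one of the options minimized over in the definition of $\LD{H}$, so either $\LD{H^L}\geqslant \LD{H}-1$ (and induction (i) applies to $G^R+H^L+*k$), or some $H^{LR}$ has $\LD{H^{LR}}\geqslant \LD{H}$ and Right plays $H^L\to H^{LR}$, invoking the strengthened claim (ii) on a smaller star system whose gap has grown to at least $2$. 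Part (ii) is handled by a dual analysis of Left's three initial moves: the $*k$-move falls to (i); the $H$-move is dispatched by the same three-way stop split, with Right first shrinking $\LS{G}$ to $\LS{G^R}\leqslant 0$ via an auxiliary $G\to G^R$ step when the hot nature of $G$ prevents a clean stop estimate; and the $G$-move is handled by the dual stop estimate together with the observation (the Left-form of Observation \ref{obs:remote}) that $\LD{H}\geqslant 2$ forces $H<\cgallstar$.

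The hardest step is the subcase where Left plays $H\to H^L$ with both stops of $H^L$ nonnegative yet $\LD{H^L}<\LD{H}-1$: the gap appears to collapse, and rescuing it requires Right to play $H^L\to H^{LR}$ to an option witnessing the alternative branch in Definition \ref{def:dist}. Checking that such an $H^{LR}$ restores a star system (i.e.\ satisfies $\LS{H^{LR}}\geqslant 0$, which follows from $\RS{H^L}\geqslant 0$) and realizes gap $\geqslant 2$ is exactly what forces (i) and (ii) to be proved hand-in-hand.
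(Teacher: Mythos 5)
Your part (i) is essentially the paper's argument: Right plays an optimal $G^R$ (so $\LSI{G^R}\leqslant 0$ and $\RDI{G^R}\leqslant\RDI{G}-1$), Left's replies in $*k$ and in $G^R$ are controlled by Lemma \ref{lem:DistanceSteps} and induction, and a reply $H\to H^L$ is split on the stops of $H^L$ exactly as in the paper. Your dichotomy for a non-optimal $H^L$ with $\RSI{H^L}\geqslant 0$ (either $\LDI{H^L}\geqslant\LDI{H}-1$, or some $H^{LR}$ has $\LDI{H^{LR}}\geqslant\LDI{H}$) is in fact a cleaner reading of Definition \ref{def:dist} than the paper's appeal to Lemma \ref{lem:DistanceSteps}, which is only stated for optimal options. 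So far, so good.

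The gap is in your strengthened claim (ii): \emph{for an arbitrary star system}, $\LDI{H}-\RDI{G}\geqslant 2$ does \emph{not} imply that Right wins moving second, and your sketch of the $G\to G^L$ case cannot be repaired. The star-system hypothesis $\RSI{G}\leqslant 0$ constrains only Right's options of $G$; Left may own an option of $G$ that escapes upward entirely. Concretely, take $G=\{100\,|\,0\}$, so $\RSI{G}=0$ and $\RDI{G}=1$ (no $*k$ dominates $G$ because Left can move to $100$, while $G^R=0$ gives distance $1$), and take $H=-\{5.\cgup\,|\,\{3.\cgup\,|\,0\}\}$, an infinitesimal with $\LDI{H}=3$ by your Example 5. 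Then $G+H+*0$ is a star system with gap $2$, yet Left moving first plays $G\to 100$ and wins outright, since $100+H>0$. (The same failure occurs with $\LSI{G}\leqslant 0$, e.g.\ $G=\{U\,|\,0\}$ with $U$ the canonical form of $1000.\cgup$: here $\RDI{G}=1$ but $\RDI{U}$ is enormous and $U+H+*k>0$.) The fact $H\cglfuz\cgallstar$ is powerless against such a $G^L$. This is precisely why the paper proves a second-player statement only under $\RDI{G}=0$ (Lemma \ref{lem:DistanceZero}), where $G\leqslant *k'$ controls every Left move in $G$, and otherwise never lets the first component drift away from the fixed optimal $G^R$: it argues instead that the back-and-forth inside the $H$ component must terminate because games are short, and all of Left's eventual deviations land in positions where $\RDI{G^{RL}}\leqslant\RDI{G}$ is known. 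To salvage your scheme you must build that information into (ii) — e.g.\ state it only for first components that are optimal Right options of some $G_0$ with $\LDI{\cdot}-\RDI{G_0}\geqslant 1$, so that every Left move in the first component has Right distance at most $\RDI{G_0}$ — rather than for arbitrary star systems.
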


\begin{proof}We only prove the first part since the proof of the second is similar.

Suppose first that $\RD{G}=0$ and $\LD{H}\geqslant 1$. By Lemma \ref{lem:DistanceZero}, there is a winning move for Right in $G+H+*k$.

Suppose now that $\RD{G}\geqslant1$ and $\LD{H}\geqslant2$. Let us see that Right wins by moving to $G^R+H+*k$, where
$G^R$ is an optimal option as defined in Definition~\ref{def:dist}. After this optimal move, the difference $\LD{H}-\RD{G^R}$ is at least two. If $\RD{G^R}=0$ then, by Lemma \ref{lem:DistanceZero}, Right wins. Hence, suppose that $\RD{G^R}$ is still positive. If Left moves to $G^R+H^L+*k$ with $\RS{H^L}<0$, while $\LS{G^R}\leqslant 0$ and $\LS{*k}=0$, then, by Theorem~\ref{th:stops},
we have $\RS{G^R+H^L+*k}\leqslant\LS{G^R}+\RS{H^L+*k}\leqslant\RS{H^L+*k}\leqslant \RS{H^L}+\LS{*k}<0$.
This means that Right wins. Otherwise, if Left answers to $G^R+H^L+*k$ and $\LD{H^L}-\RD{G^R}< 1$, then
we have the inequality $\LD{H^L}<\LD{H}-1$. In accordance with Lemma \ref{lem:DistanceSteps}, there exists a Right option $H^{LR}$ with $\LD{H^{LR}}=\LD{H}$, meaning that the Left distance is preserved in the second component. Therefore, given that we are considering short games, there will come a point in some $H'$ that is a follower of $H$ where Left will no longer be able to respond to $G^R+H'^L+*k$ and $\LD{H'^L}-\RD{G^R}< 1$. Regarding that moment, we consider the following possibilities.

If Left plays to $G^R+H'+*(k-j)$, then $\LD{H'}-\RD{G^R}\geqslant 2$, and, by induction, Right wins.

If Left plays to $G^R+H'^L+*k$ where $\LD{H'^L}-\RD{G^R}\geqslant 1$, then, by induction, Right wins.

If  Left answers to $G^{RL}+H'+*k$, again by Lemma \ref{lem:DistanceSteps}, $\RD{G^{RL}}\leqslant \RD{G}$. In that case, we have $\LD{H'}-\RD{G^{RL}}\geqslant \LD{H'}-\RD{G}\geqslant 1$, and, by induction, Right wins.
\end{proof}

\begin{corollary}\label{th:twoahead2}
Let  $G$ and $H$ be two infinitesimals. If $G+H+*k=0$ then $\LDI{H}=\RDI{G}$ and $\LDI{G}=\RDI{H}$.
\end{corollary}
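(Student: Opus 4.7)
The plan is to derive this as a near-immediate corollary of Theorem \ref{th:twoahead}, the key observation being that the symmetric roles of $G$ and $H$ are available because both are infinitesimals.

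First I would observe that for any infinitesimal $X$ we have $\LS{X}=\RS{X}=0$, so both $G+H+*k$ and $H+G+*k$ qualify as star systems in the sense of Definition \ref{def:ssyst}: in each case the required inequalities $\RS{\cdot}\leqslant 0$ on the first summand and $\LS{\cdot}\geqslant 0$ on the second summand hold trivially. Next, since $G+H+*k=0$ the position is a $\mathcal{P}$-position, so neither player moving first has a winning move.

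Now I would apply Theorem \ref{th:twoahead} twice. Reading $G+H+*k$ with $G$ as the first summand and $H$ as the second: the absence of a Right first-player win forces $\LD{H}-\RD{G}\leqslant 0$, and the absence of a Left first-player win forces $\RD{H}-\LD{G}\leqslant 0$. Swapping roles and reading $H+G+*k$ with $H$ as the first summand and $G$ as the second (still a star system, since $G,H$ are infinitesimals, and still equal to $0$): the absence of a Right win gives $\LD{G}-\RD{H}\leqslant 0$, and the absence of a Left win gives $\RD{G}-\LD{H}\leqslant 0$. Combining these four inequalities yields $\LD{H}=\RD{G}$ and $\LD{G}=\RD{H}$, completing the proof.

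There is no real obstacle: the content is entirely packed into Theorem \ref{th:twoahead}, and the corollary is a matter of contrapositive plus symmetry. The only point worth underscoring for clarity is that the hypothesis ``$G$ and $H$ are infinitesimals'' is precisely what lets us invoke the theorem in both orderings, since the star system definition is asymmetric in its hypotheses on the two non-nimber summands.
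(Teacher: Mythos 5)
Your proof is correct and follows essentially the same route as the paper: both arguments are the contrapositive of Theorem~\ref{th:twoahead}, with the remaining two inequalities obtained by swapping the roles of $G$ and $H$ (which the paper compresses into ``the other inequalities lead to similar contradictions''). Your version is just slightly more explicit in noting that the swapped sum $H+G+*k$ is still a star system because $G$ and $H$ are infinitesimals.
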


\begin{proof}
If $\LD{H}>\RD{G}$ then we have $\LD{H}-\RD{G}\geqslant 1$ and, by Theorem \ref{th:twoahead}, Right has a winning move. That contradicts the assumption\linebreak $G+H+*k\in\mathcal{P}$. The other inequalities lead to a similar contradictions. \end{proof}

\section{Indecomposable nimbers}
\label{sec:nimbers}
In Table~\ref{tab:table}, we can observe that if the sizes of nimbers are not powers of $2$, we have natural decompositions. However, if the sizes are powers of $2$, that is not the case. For example, consider $*3=*+*2$. Since $b(*2) = 2$, $b(*) = 1$, and $b(*3) = 3$, the last disjunctive sum is a strong decomposition. To transform this observation into a proof, we have to analyze decompositions of the type $G+H+*(2^n)$. It is important to mention that we have to consider \emph{all possible game forms} $G$ and $H$. These may be numbers, tepid forms, or hot forms. Regarding hot forms, we will make use of Theorem \ref{th:stops} to deal with a hypothetical Left answer to $G^{R_1}+H+*(2^n-k)$. Regarding tepid forms and infinitesimals we will make use of Theorem \ref{th:remoteness} and our new concept of distance (Section \ref{sec:dist}) to deal with a hypothetical Left answer to $G^{R_1}+H+*(2^n-k)$.

\begin{theorem}\label{th:nimbers}Let $*k$ be a nimber. Then $*k$ is indecomposable if and only if $k$ is a power of $2$.
\end{theorem}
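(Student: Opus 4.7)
The plan is to handle the two implications separately. The forward direction is easy: if $k$ is not a power of $2$, let $2^a$ be the top bit of the binary expansion of $k$ and let $r = k - 2^a$, so $0 < r < 2^a$. Since the binary supports of $2^a$ and $r$ are disjoint, $*(2^a) + *r = *(2^a \oplus r) = *k$, and by Theorem~\ref{th:nimsum} both birthdays $2^a$ and $r$ are strictly smaller than $k$, giving a decomposition.

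For the hard direction I would assume that $*(2^n) = G + H$ is a decomposition minimizing $b(G) + b(H)$, with $G$ and $H$ in canonical form, and aim at a contradiction by case analysis on the nature of $G$. Item 3 of Theorem~\ref{th:nimsum} rules out either summand being a nimber; if instead $G$ were a nonzero number $x$, then $H = -x + *(2^n)$ and Corollary~\ref{th:birthnimber} would give $b(H) = |x| + 2^n > 2^n$, violating $b(H) < 2^n$. So both $G$ and $H$ are non-number, non-nimber, and applying Theorem~\ref{th:stops} to $\LS{G+H} = \RS{G+H} = 0$ pins the stops down to $\RS{H} = -\LS{G}$ and $\LS{H} = -\RS{G}$, so $G$ and $H$ share a temperature.

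In the hot subcase I would pick a Right option $G^R$ with $\LS{G^R} < \LS{G}$ and consider Left's response to $G^R$ in $G + H + *(2^n) = 0$. A response inside $G^R$ to $G^{RL}$ gives $G^{RL} \geq G$ and reverses through $G^R$, contradicting the canonical form of $G$; a response inside $H$ leads via a second Right optimal move to a domination contradiction in $H$, exactly as in the proof of Theorem~\ref{th:numbers}; and a response inside $*(2^n)$ to $*(2^n - j)$ is ruled out by the stop bound $\RS{G^R + H + *(2^n - j)} \leq \LS{G^R} + \RS{H} < \LS{G} + \RS{H} = 0$, which turns the position into a Right win.

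The tepid subcase requires the Section~\ref{sec:dist} machinery. I would first write $G = x + G'$ and $H = -x + H'$ with $G'$ and $H'$ infinitesimals (Theorem~\ref{th:translationrev} plus Corollary~\ref{th:birthnimber}), so that one may assume $G, H$ themselves are infinitesimals, not nimbers. Then $G + H + *(2^n) = 0$, Corollary~\ref{th:twoahead2} gives $\LD{G} = \RD{H}$ and $\RD{G} = \LD{H}$, and Lemma~\ref{lem:remoteness} makes $*(2^n - 1)$ remote for both $G$ and $H$. Norton Remoteness then forces $o(G + *m)$ to be constant for all $m \geq 2^n - 1$, which via the value identity $G + *m \equiv *(2^n \oplus m) - H$ translates to $o(*j - H)$ being constant in $j \geq 0$; the symmetric statement holds for $H$. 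The hard part will be combining these outcome constancies with the distance equalities of Corollary~\ref{th:twoahead2} to show that any compatible pair $(G, H)$ must force one birthday to reach at least $2^n$, contradicting the assumption. This last step is the main obstacle, because unlike the numbers proof, Theorem~\ref{th:avoidance} is unavailable to forbid Left from moving in $*(2^n)$, so a stops-only argument cannot close the case; the distance-plus-remoteness combination of Section~\ref{sec:dist} is what must compensate for this loss.
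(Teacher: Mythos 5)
Your forward direction, the number case, and the hot case all match the paper's argument essentially step for step: the binary-top-bit decomposition, the birthday count via Corollary~\ref{th:birthnimber}, and the stops-plus-reversibility-plus-domination analysis of Left's three possible replies to $G^{R_1}$ are exactly what the paper does. The problem is the tepid case, which you explicitly leave open (``this last step is the main obstacle''), and that step is not a routine verification --- it is the core of the theorem and the reason Section~\ref{sec:dist} exists. As written, your proposal assembles the right tools but does not prove the statement.

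Two concrete things are missing. First, before Corollary~\ref{th:twoahead2} can do any work you must show the distances are positive: the paper uses Lemma~\ref{lem:remoteness} plus the Norton Remoteness Theorem, together with the translation $H'=-G'+*(2^n)$ and the identity $*k+*(2^n)=*(k\oplus 2^n)$, to conclude $G'\cglfuz\cgallstar$ and $H'\cggfuz\cgallstar$, whence $\LD{G'}=\RD{H'}\geqslant 1$. Your ``outcome constancy'' observation is in this spirit but stops short of that conclusion, and without it the subsequent argument has no starting move. Second, and more importantly, the actual contradiction: the paper has Right move to $G'+H'^{R_1}+*(2^n)$ with $\LS{H'^{R_1}}=0$, decreasing the distance in the second component, and then kills every Left reply --- a reply in the star loses by Theorem~\ref{th:twoahead}; a reply to $H'^{R_1L}$ forces $H'^{R_1L}\geqslant H'$ wait, rather forces a reversible option; a reply to $G'^L$ with $\RS{G'^L}<0$ loses by the stop estimates of Theorem~\ref{th:stops}; and a reply to $G'^L$ with $\RS{G'^L}=0$ that restores the distance is paired against Right's answer to Left's first move $G'^L+H'+*(2^n)$, yielding $G'^L+H'^{R_2}+*(2^n)<0<G'^L+H'^{R_1}+*(2^n)$ and hence a dominated option in $H'$. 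The strictness of these inequalities comes from Theorem~\ref{th:translationrev} (options of $G'$ correspond to options of $G$) together with the minimality of $b(G)+b(H)$. None of this is in your proposal, so the backward direction is established only for numbers, nimbers, and hot summands; the tepid case remains unproved.
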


\begin{proof}
($\Rightarrow$) Assume that $k$ is not a power of two. By letting $2^j$ be the largest power of two strictly smaller than $k$, we can write $k$ as $2^j + (k-2^j)$ and $*(2^j) + *(k-2^j)$ is a strong decomposition of $*k$.\\

\noindent
($\Leftarrow$) Suppose that $G+H=*(2^n)$ is a decomposition of $*(2^n)$, that $G$ and $H$ are canonical forms, and that $b(G)+b(H)$ is minimum. By Theorem \ref{th:nimsum}, neither $G$ nor $H$ can be a nimber. Therefore, $G$ and $H$ can only be numbers, hot forms, or tepid forms. We prove that each case is impossible.

If $G$ is a number, then, by Theorem \ref{th:brnumbers}, we have $b(H)=b(G)+2^n \geqslant b(\ast(2^n))$, which contradicts the assumption that $G+H=\ast(2^n)$ is a decomposition. Suppose $G$ is hot, meaning that $\LS{G}>\RS{G}$. Since we are assuming that $G+H=*(2^n)$, we have $\LS{*(2^n)}=\RS{*(2^n)}=0$, and $\LS{G+H}=\RS{G+H}=0$.  From Theorem \ref{th:stops}, we have that $\LS{G}+\RS{H}\leqslant \LS{G+H}=0$, and $0=\RS{G+H}\leqslant \LS{G}+\RS{H}$. Therefore, we can conclude that $\LS{G}\leqslant -\RS{H}\leqslant \LS{G}$, i.e., $\LS{G}=-\RS{H}$. We can deduce that $\RS{G}=-\LS{H}$ in a similar way. Since $G+H+*(2^n)=0$, i.e., a $\mathcal{P}$-position, if Right chooses to move to $G^{R_1}+H+*(2^n)$, where $\RS{G}=\LS{G^{R_1}}$, Left must have a winning reply. If Left moves to $G^{R_1}+H+*(2^n-j)$, then Right can answer with $G^{R_1}+H^{R_2}+*(2^n-j)$, where $\RS{H}=\LS{H^{R_2}}$.  However,
\begin{eqnarray*}
\LS{G^{R_1}+H^{R_2}}&\leqslant& \LS{G^{R_1}}+\LS{H^{R_2}},\hfill\text{ by Theorem \ref{th:stops}}\\
&=&\RS{G}+\RS{H}\\
&=&\RS{G}-\LS{G}.
\end{eqnarray*}
Since $G$ is hot and both $\RS{G}$ and $\LS{G}$ are dyadic rationals, then
\[\LS{G^{R_1}+H^{R_2}}\leqslant \RS{G}-\LS{G}=x<0,\]
 for some dyadic rational $x$. By using Theorem \ref{th:stops},
 we deduce the inequality $G^{R_1}+H^{R_2}+*(2^n-j)\leqslant x+*(2^n-j)<0$. Consequently, Left's move to $*(2^n-j)$ is not a winning move. Now, suppose a Left winning move is $G^{R_1L}+H+*(2^n)$, i.e., $G^{R_1L}+H+*(2^n)\geqslant 0.$  By combining this inequality with $G+H+*(2^n)=0$, we obtain $G^{R_1L}\geqslant G$. Consequently, $G^{R_1}$ is a reversible option, contradicting the assumption that $G$ is in canonical form. The only possibility for a winning move that remains is $G^{R_1}+H^L+*(2^n)>0$, where $\LS{H}=\RS{H^L}$. The inequality is strict because $b(G)+b(H)$ is minimum. Now consider a Right winning move in $G+H^L+*(2^n)$. The previous arguments, with $G$ and $H$ interchanged, show that Right's winning response must be in $G$ giving $G^{R_2}+H^L+*(2^n)< 0$. However, this inequality with $G^{R_1}+H^L+*(2^n)>0$ shows that $G^{R_1}$ is a dominated option in $G$, a contradiction.

 Since $G$ and, by symmetry,  $H$ cannot be numbers or hot, the only case left is both are tepid.  Thus, according to that, suppose that $G$ and $H$ are tepid. In short, let us suppose that $G=x+G'$ and $H=y+H'$ where $x$ and $y$ are numbers, and $G'$ and $H'$ are canonical forms of infinitesimals. By assumption, $\LS{G+H+*(2^n)}=\LS{0}=0$ and $\LS{G'+H'+*(2^n)}=0$ since the disjunctive sum of infinitesimals is infinitesimal. By Theorem~\ref{th:stops}, it follows that
$\LS{G+H+*(2^n)}=x+y+\LS{G'+H'+*(2^n)}$, i.e.,\linebreak $0=x+y+0$. Thus, $G=x+G'$, and $H=-x+H'$. Theorem~\ref{th:nimsum} guarantees that if either $G'$ or $H'$ is a nimber, then one of them, say $G'$, is $*k$ with $k\geqslant2^n$. According to Corollary~\ref{th:birthnimber}, $b(G) = |x|+k\geqslant2^n$, which ensures that $G+H$ is not a decomposition of $*(2^n)$, and leads to a contradiction. Hence, neither $G'$ nor $H'$ can be a nimber. Note also that, by Lemma~\ref{lem:remoteness}, $*(2^{n}-1)$ is remote for $G'$ and for $H'$. Now, without loss of generality, we may assume that $G'\cglfuz *(2^{n})$, since if $G'\cggfuz *(2^{n})$, the argument is analogous. Using Theorem~\ref{th:remoteness}, we can infer that, for all $j\geqslant 2^{n}-1$,  $G'\cglfuz *j$. Since $G'\cglfuz *(2^{n}-1)$, we know that $-G'+*(2^{n})\cggfuz *(2^{n+1}-1)$, and therefore, $H'=-G'+*(2^{n})\cggfuz *(2^{n+1}-1)$. By Theorem~\ref{th:remoteness} again, we conclude that, for all $j\geqslant 2^{n}-1$,  $H'\cggfuz *j$. Additionally, for any $k<2^n$, we have $G'\cglfuz*k+*2^n$, which implies $G'+*2^n\cglfuz*k$, and thus $-H'\cglfuz*k$ implies $H'\cggfuz*k$. Therefore, $H'\cggfuz \cgallstar$, and, with analogous reasoning, $G\cglfuz \cgallstar$. It follows that $\RD{H'}\geqslant 1$ and $\LD{G'}\geqslant 1$. Moreover, Corollary~\ref{th:twoahead2} ensures that $\RD{H'}=\LD{G'}\geqslant 1$. Now, let us suppose that in the position $G'+H'+*(2^n)$ Right moves to $G'+H'^{R_1}+*(2^n)$ with $\LS{H'^{R_1}}= 0$, decreasing the distance in the second component. If Left responds to some $G'+H'^{R_1}+*k$ where $k<2^n$, she cannot win. By Theorem \ref{th:twoahead}, in that position, Right can play and win. Left cannot reply with any $G'+H'^{R_1L}+*(2^n)\geqslant 0$ since this results in a reversible option in $G'$ when combined with $G'+H'+*(2^n)=0$. Left cannot reply with any $G'^L+H'^{R_1}+*(2^n)>0$ where $\RS{G'^L}<0$, while $\LS{H'^{R_1}}= 0$ and $\LS{*(2^n)}=0$,
then, by Theorem~\ref{th:stops}, we have $\RS{G'^L+H'^{R_1}+*(2^n)}\leqslant\LS{H'^{R_1}}+\RS{G'^L+*(2^n)}=
\RS{G'^L+*(2^n)}\leqslant \RS{G'^L}+\LS{*(2^n)}<0$. This means that Right wins. Therefore, Left must try to find a move $G'^L$ such
that $\RS{G'^L}=0$. In this case, $\LD{G'^L}$ is well defined, and due to Theorem \ref{th:twoahead}, Left must be able to reduce
the distance. However, Left cannot win by answering any $G'^L+H'^{R_1}+*(2^n)>0$ that reduces the distance in the first component. Observe that the inequality is strict since, by Theorem~\ref{th:translationrev}, $x+G'^L\in G^{\mathcal{L}}$, $-x+H'^{R_1}\in H^{\mathcal{R}}$, and $b(G)+b(H)$ is minimum. If that were possible, using similar arguments, Right would need to find a winning move $G'^L+H'^{R_2}+*(2^n)<0$ against Left's first move in $G'+H'+*(2^n)$ to $G'^L+H'+*(2^n)$ . Joining this inequality with $G'^L+H'^{R_1}+*(2^n)>0$ would yield a dominated option in $H'$. Since Left has no way of emerging victorious against Right's move to $G'+H'^R+*(2^n)$, we arrive at a contradiction.

All cases, i.e., $G$ and $H$ being numbers, hot forms, nimbers or tepid forms,  gave contradictions, hence, the theorem is proved.
\end{proof}

\begin{theorem}\label{th:strongnimbers}Let $k$ be a nonnegative integer. If $k$ is not a power of two then $*k$ is strongly decomposable and the only strong decompositions of $*k$ are sums of nimbers.~
\end{theorem}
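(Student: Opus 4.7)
The plan is to split the proof into two parts: first, exhibit a strong decomposition of $*k$ into nimbers when $k$ is not a power of $2$; second, show that every strong decomposition of $*k$ must consist of two nimbers.

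For the existence part, I would let $2^j$ be the largest power of $2$ strictly less than $k$. Then $0<k-2^j<2^j$, so the binary representations of $2^j$ and $k-2^j$ share no nonzero digit in common, which means $2^j\oplus(k-2^j)=k$, i.e., $*(2^j)+*(k-2^j)=*k$. By Theorem~\ref{th:nimsum}, the birthdays satisfy $b(*(2^j))+b(*(k-2^j))=2^j+(k-2^j)=k=b(*k)$, so this is a strong decomposition, and it is a sum of (nonzero) nimbers.

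For the uniqueness part, I would argue in the spirit of Theorem~\ref{th:strongnumbers}. Any strong decomposition $*k=G+H$ has $b(G)+b(H)=k=b(*k)$, and this is automatically the minimum value of $b(G)+b(H)$ over all decompositions of $*k$ (since each term must be less than $k$). Therefore the case analysis from the proof of Theorem~\ref{th:nimbers} applies verbatim: the assumption that $G$ is a number is defeated by Corollary~\ref{th:birthnimber}, the assumption that $G$ is a hot form is defeated by the stop arguments combined with the minimality of $b(G)+b(H)$, and the assumption that $G$ and $H$ are tepid is defeated by the distance and remoteness arguments (Lemma~\ref{lem:remoteness}, Theorem~\ref{th:remoteness}, and Theorem~\ref{th:twoahead}). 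These arguments never use that $k$ equals $2^n$; the only place in the proof of Theorem~\ref{th:nimbers} where the ``power of $2$'' hypothesis is essential is the invocation of Theorem~\ref{th:nimsum} to forbid $G$ and $H$ from being nimbers simultaneously. Dropping that hypothesis leaves precisely the nimber-plus-nimber case as the only survivor, and since a strong decomposition requires both summands nonzero, we obtain the claim.

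The main obstacle is bookkeeping: one must confirm that every step in the long proof of Theorem~\ref{th:nimbers}---including the intricate tepid-case argument built on the star-system and distance machinery of Section~\ref{sec:dist}---depends only on the identities $G+H+*k=0$ and $\LSI{*k}=\RSI{*k}=0$ and on the minimality of $b(G)+b(H)$, rather than on $k$ being a power of $2$. Once that is verified by inspection, the theorem follows essentially as a corollary of the earlier proof, exactly as Theorem~\ref{th:strongnumbers} followed from Theorem~\ref{th:numbers}.
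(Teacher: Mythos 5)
Your proposal follows the paper's proof almost exactly: the same explicit strong decomposition $*(2^j)+*(k-2^j)$ for existence, and the same reduction of uniqueness to the minimality of $b(G)+b(H)$ combined with the case analysis from the indecomposability proof. One item of your bookkeeping is off, though. The power-of-two hypothesis is used \emph{twice} in the proof of Theorem~\ref{th:nimbers}, not once: besides the initial exclusion of two nimber summands via Theorem~\ref{th:nimsum}, it is invoked again inside the tepid case, where $G=x+G'$ and $H=-x+H'$, to rule out the sub-case in which $G'$ or $H'$ is a nimber; the distance and remoteness machinery is deployed only after that sub-case has been eliminated. For general $k$ this second invocation fails, so the sub-case must be disposed of directly. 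Fortunately the strong-decomposition constraint does it: if $G'=*m$, then $H'=*(k\oplus m)$, and Corollary~\ref{th:birthnimber} gives $b(G)+b(H)=2b(x)+m+(k\oplus m)\geqslant 2b(x)+k$, since the nim-sum of two numbers never exceeds their ordinary sum; equality with $k$ forces $x=0$, so $G$ and $H$ are nimbers after all, landing in the allowed case. With that patch your argument is complete and coincides with the paper's (which, for what it is worth, is equally terse on this point and even cites Theorem~\ref{th:numbers} where Theorem~\ref{th:nimbers} is clearly intended).
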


\begin{proof}
All strong decompositions $*k=G+H$ are minimal in terms of \linebreak $b(G)+b(H)$. Therefore, if $G$ and $H$ are not nimbers, then all contradictions found in the proof of Theorem \ref{th:numbers} show that a strong decomposition $*k=G+H$ cannot exist. If both $G$ and $H$ are nimbers, we can let $2^j$ be the largest power of two strictly smaller than $k$, and consider $G=*(2^j)$ and $H=*(k-2^j)$. By Theorem \ref{th:nimsum}, we have $b(G)=2^j$, $b(H)=k-2^j$, $b(*k)=k$, $b(G)+b(H)=b(*k)$, and $*k=G+H$ is a strong decomposition.
\end{proof}

\section{Final remarks}
\label{sec:final}

Decompositions of numbers and nimbers are already used implicitly, when analyzing games.

In a disjunctive sum of numbers  represented by mixed fractions \linebreak$m_1\bm{\frac{r_1}{2^{n_1}}}+\ldots+m_k\bm{\frac{r_k}{2^{n_k}}}$, it is easy to determine the outcome. If the sum is positive, Left wins, if negative, then Right wins, and if it is zero, then $G$ is a second player win. The standard texts \cite{ANW019,BCG001,Con001,Sie013} claim that the game is over, however, even if the components are in canonical form, there is still a decision to be made in this type of endgame. We make this explicit. If all the numbers are integers then playing in any has the same effect--the sum changes by $-1$ if Left plays, and $+1$ if Right. Otherwise, move in the fraction that has the greatest denominator. Consider a dyadic $m+\frac{r}{2^n}$, where $m$ is an integer, $0<r=2j+1<2^n$, and $j\geqslant 0$. Since the canonical form of that component is $\{m\frac{j}{2^{n-1}}\,|\,m\frac{j+1}{2^{n-1}}\}$, the change in the sum will be $-\frac{1}{2^n}$ if Left plays on it and $\frac{1}{2^n}$ if Right plays on it. For example,
let
\begin{eqnarray*}
G&=&1\bm{\frac{7}{8}}-1\bm{\frac{1}{2}}-\bm{\frac{1}{4}}=\frac{1}{8}\\
   &=& \left(1+\frac{7}{8}\right)+\left(-2+\frac{1}{2}\right)+\left(-1+\frac{3}{4}\right).\end{eqnarray*}
 Clearly, $G=\frac{1}{8}>0$ and Left wins. However, Left cannot play in any component that decreases the sum by more than $\frac{1}{8}$. That is, Left must play on the first component, reducing the sum to zero, which Left wins playing second. Playing on the second component decreases the sum to
$-\frac{3}{8}$, and playing on the third component decreases the sum to $-\frac{1}{8}$, both of which Left loses.\footnote{In fact, it is possible to prove a ``Greatest Denominator Choice Theorem'' even for sums of numbers that are not in canonical form.}
It is interesting to observe that the indecomposable numbers (absolute value not exceeding $1$) are crucial to find the good moves. The algebraic reason for this lies in the fact that the incentive of a non-integer component $m\bm{\frac{r}{2^{n}}}$ is the incentive of its indecomposable fractional part $\bm{\frac{r}{2^{n}}}$. In the case where all components are non-zero integers, all components have an incentive equal to the incentive of the only non-zero indecomposable integers that exist, which are $1$ and $-1$.

In a disjunctive sum of nimbers $*m_1+\ldots+*m_k$, which represents an impartial position, either all components are equal to $*$ or the Grundy value of at least one component is greater than $1$. If all components are equal to $*$, all moves are equally good. If this is not the case, the proper procedure is to decompose the components according to the binary representations of their sizes and then cancel the powers of $2$ in pairs. For example, the sum $*7+*5+*9$ is equal to $(*4+*2+*)+(*4+*)+(*8+*)$, which, in turn, is equal to $(\cancel{*4}+*2+\cancel{*})+(\cancel{*4}+\cancel{*})+(*8+*)=*11$. The first player can win by reducing the size of the last nimber to $2$. It is interesting to observe that the indecomposable nimbers (sizes equal to powers of $2$) are crucial to find the good moves. The algebraic reason for this lies in the fact that powers of $2$ sustain binary representations and cancellations in pairs are essentially the definition of the \textsc{nim} sum, which has been proven to be the determining operation for these cases.

Now that Theorems \ref{th:numbers}, \ref{th:strongnumbers}, \ref{th:nimbers}, and \ref{th:strongnimbers} have been proved, it is natural to think of games of the type $x+*n$, where $x$ is a number and $*n$ is a nimber. A consequence of Corollary \ref{th:birthnimber} is that these games are strongly decomposable. This corollary can be given an interesting interpretation if we turn our attention to \textsc{blue-red-green hackenbush} strings. It is not particularly difficult to prove that if $H$ is a \textsc{blue-red-green hackenbush} string with $n$ edges, then $b(H)=n$. As a consequence of this, if $G$ and $H$ are two \textsc{blue-red-green hackenbush} strings such that $G \os H=G+H$, then $G+H$ is a strong decomposition of $G \os H$.\footnote{The symbol ``$\os$'' designates the \emph{ordinal sum}; if a player moves on the bottom, the top disappears, while if a player moves on the top, nothing happens to the bottom \citep{ANW019,BCG001,Con001,Sie013}.} That happens because $b(G \os H)=b(G)+b(H)$ simply reflects the fact that the number of edges of $G \os H$ is the sum of the numbers of edges of $G$ and $H$. By using Lemma 4.3.4 of \cite{Mc016}, it turns out that $x \os \cgstar n = x + *n$, and, thus, $b(x+*n) = b(x)+b(*n) = b(x)+n$, which is precisely the statement of Corollary \ref{th:birthnimber}. It is also worth noting that \textsc{blue-red-green hackenbush} strings could have also been used to prove both Theorem~\ref{th:brnumbers} and the first item of Theorem \ref{th:nimsum}. Moreover, the indecomposable numbers correspond to \textsc{blue-red hackenbush} strings with two bottommost edges of different colors, while the decomposable numbers correspond to \textsc{blue-red hackenbush} strings with two bottommost edges of the same color. Although Theorems \ref{th:numbers} and \ref{th:nimbers}, the main contributions of this paper, concern decompositions of any kind (both strong and not strong), these considerations point to future work that can be done on the use of ordinal sums and rulesets like \textsc{blue-red-green hackenbush} to explore whether certain games are strongly decomposable.

Just like a primality test, it seems overly ambitious to seek an expeditious test for assessing the decomposability of an arbitrary short game. This idea can already be supported by the preceding paragraphs. The algebraic reasons for the importance of indecomposable numbers and indecomposable nimbers in disjunctive sums are considerably distinct. This suggests that the fundamental nature of an indecomposable game may not be general, but rather dependent on a more restricted class to which that indecomposable game belongs. Nevertheless, there are classes of games widely studied in specialized literature, such as switches, tinies and minies, uptimals, and so on. As seen here, indecomposable games seem to bring something essential with them. In the future, it would be interesting to expand the work to other classes, with a view to better understanding the algebra of the group of short combinatorial games.


\begin{thebibliography}{10}

\bibitem[Albert et al.(2019)]{ANW019} Albert, M., Nowakowski, R., Wolfe, D. \emph{Lessons in Play: An Introduction to Combinatorial Game Theory}, Second Edition, A. K. Peters/CRC Press, New York, 2019.

\bibitem[Berlekamp et al.(1982)]{BCG001} Berlekamp, E., Conway, J., Guy, R. \textit{Winning Ways}, Volume 1, Second Edition, A. K. Peters Ltd., Wellesley, Massachusetts, 2001.

\bibitem[Bouton(1902)]{Bou02} Bouton, C. ``\textsc{nim}, a game with a complete mathematical theory'', \emph{The Annals of Mathematics} 3(2), 35--39, Princeton, 1902.

\bibitem[Conway(1976)]{Con001} Conway, J. \emph{On Numbers and Games}, Second Edition, A. K. Peters Ltd., Wellesley, Massachusetts, 2001.

\bibitem[Grundy(1939)]{Gru39} Grundy, P. ``Mathematics and games'', \emph{Eureka} 2, 6--8, Cambridge University, 1939.

\bibitem[Knuth(1974)]{Knu74} Knuth, D. \emph{Surreal Numbers: How two ex-students turned on to pure mathematics and found total happiness}, Addison-Wesley, Massachusetts, 1974.

\bibitem[McKay(2016)]{Mc016} McKay, N. A. ``Forms and Values of Number-like and Nimber-like Games'', PhD Thesis, Dalhousie University, Canada, 2016.

\bibitem[Siegel(2013)]{Sie013} Siegel, A. \emph{Combinatorial Game Theory},  American Mathematical Society, Providence, Rhode Island, 2013.

\bibitem[Sprague(1935)]{Spra35} Sprague, R. ``Über mathematische Kampfspiele'', \emph{Tohoku Mathematical Journal} 41, 438--444, Tohoku University, 1935.

\end{thebibliography}
\end{document}